\newtheorem{thm}{Theorem}
\theoremstyle{plain}
\newtheorem*{acknowledgement}{Acknowledgement}
\newtheorem{cor}{Corollary}
\newtheorem{defn}{Definition}
\newtheorem{lem}{Lemma}
\newtheorem{prop}{Proposition}
\newtheorem{rem}{Remark}
\numberwithin{equation}{section}
\renewcommand{\phi}{\varphi}
\newcommand{\BC}{{\mathbb{C}}}
\newcommand{\BF}{{\mathbb{F}}}
\newcommand{\BH}{{\mathbb{H}}}
\newcommand{\BN}{{\mathbb{N}}}
\newcommand{\BP}{{\mathbb{P}}}
\newcommand{\BQ}{{\mathbb{Q}}}
\newcommand{\BZ}{{\mathbb{Z}}}
\newcommand{\FS}{{\mathfrak{S}}}
\newcommand{\CM}{{\mathcal M}}
\renewcommand{\mod}{\mathop{\rm mod}\nolimits}
\newcommand{\sign}{\mathop{\rm sign}\nolimits}
\newcommand{\im}{\mathop{{\rm Im}}\nolimits}
\newcommand{\GL}[1]{\mathop{\rm GL}_{#1} \nolimits}
\newcommand{\SL}[1]{\mathop{\rm SL}_{#1} \nolimits}
\newcommand{\legendre}[2]{\genfrac(){}{}{#1}{#2}}
\newcommand{\slegendre}[2]{\genfrac(){}{1}{#1}{#2}}
\newcommand{\sym}{\mathop{\rm sym}}
\newcommand{\Log}{\mathop{\rm Log}\nolimits}
\newcommand{\quotient}[2]{
        \mathchoice
            {
                \text{\raise1ex\hbox{$#1$}\Big/\lower1ex\hbox{$#2$}}%
            }
            {
                #1\,/\,#2
            }
            {
                #1\,/\,#2
            }
            {
                #1\,/\,#2
            }
    }
\newcommand{\rquotient}[2]{
        \mathchoice
            {
                \text{\lower1ex\hbox{$#1$}\Big \backslash \raise01ex\hbox{$#2$}}%
            }
            {
                #1\,\backslash\,#2
            }
            {
                #1\,\backslash\,#2
            }
            {
                #1\,\backslash\,#2
            }
    }
\newcommand{\lrquotient}[3]{
        \mathchoice
            {
                \text{\lower1ex\hbox{$#1$}\Big \backslash \raise01ex\hbox{$#2$}\Big/\lower1ex\hbox{$#3$}}%
            }
            {
                #1\,\backslash\,#2\,/\,#3
            }
            {
                #1\,\backslash\,#2\,/\,#3
            }
            {
                #1\,\backslash\,#2\,/\,#3
            }
    }
\begin{document}
\selectlanguage{english}

\bibliographystyle{alpha}

\title[Supnorm of Modular Forms of half-integral Weight]{Supnorm of Modular Forms of half-integral Weight in the Weight Aspect}
\author{Raphael S. Steiner}
\address{Department of Mathematics, University of Bristol, Bristol BS8 1TW, UK}%
\email{raphael.steiner@bristol.ac.uk}%




\begin{abstract} We bound the supnorm of half-integral weight Hecke eigenforms in the Kohnen plus space of level $4$ in the weight aspect, by combining bounds obtained from the Fourier expansion with the amplification method using a Bergman kernel.
\end{abstract}
\maketitle

\section{Introduction}

The question of supremum norms of holomorphic and Maass Hecke eigenforms are connected to $L$-functions attached to them. In the case of holomorphic half-integral weight Hecke eigenforms they are directly related to the critical values of quadratic twists of the $L$-functions associated to their Shimura lift. Therefore supnorms have been studied by many in various ways: Iwaniec-Sarnak \cite{IS95} in the eigenvalue aspect, Harcos-Templier \cite{HT2}, \cite{HT3} and Saha \cite{Saha14} in the level aspect, as well as Kiral \cite{halflevel} in the case of half-integral weight, Templier \cite{Thybrid} in the level as well as the eigenvalue aspect, unifying both best known results. In the weight aspect they have been studied by Xia \cite{Supnormintweight}, Das-Sengupta \cite{DasSeng}, Rudnick \cite{R05}, Friedman-Jorgenson-Kramer \cite{FJK} and the author himself \cite{realweight}, where in the last three the condition of being a Hecke eigenform is not necessary.\\

In this paper we are concerned with the supremum norm in the weight aspect of holomorphic half-integral weight Hecke eigenforms in the Kohnen plus space of level 4. Assuming the Lindel\"of hypothesis we are able to prove an analogue of Xia's result \cite{Supnormintweight}, which states that for a for a holomorphic Hecke eigenform $f$ of integral weight for the full modular group $\SL2(\BZ)$ we have $\sup_{z \in \BH} y^{\frac{k}{2}} |f(z)| \ll_{\epsilon} k^{\frac{1}{4}+\epsilon}$. Our theorem reads as follows.

\begin{thm} Let $k \in \frac{1}{2}+\BZ$ with $k \ge \frac{5}{2}$ and $f \in S^+_k(\Gamma_0(4)^{\star})$ be a $L^2$-normalised Hecke eigenform ($\langle f,f \rangle_{\Gamma_0(4)}=1$) of half-integral weight $k$ contained in the Kohnen plus space. Assume the Lindel\"of hypothesis for the family of $L$-functions $L(F,\chi,s)$, where $F$ is any modular form of weight $2k-1$ on $\SL2(\BZ)$ and $\chi$ any primitive quadratic character. Then we have
$$
\sup_{z \in \BH} y^{\frac{k}{2}}|f(z)| \ll_{\epsilon} k^{\frac{1}{4}+\epsilon}.
$$
\label{thm:2}
\end{thm}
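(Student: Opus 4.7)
The plan is to split the analysis by the distance of $z$ to the nearest cusp of $\Gamma_0(4)$. Since $f\in S^+_k(\Gamma_0(4)^{\star})$ is stabilised up to scalar by the Fricke and Atkin--Lehner involutions, we may by symmetry reduce to the case that the cusp $\infty$ is closest. Near $\infty$ (large $y$) I would estimate $f$ via its Fourier expansion; away from every cusp (bulk of the fundamental domain) I would estimate $f$ via an amplified pre-trace formula built from a Bergman kernel for the Kohnen plus space.

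In the Fourier regime, the Kohnen--Zagier formula yields, for $n=|D|$ with $D=(-1)^{k-1/2}n$ a fundamental discriminant,
$$
|a_n|^2 = \frac{\Gamma(k-1/2)}{\pi^{k-1/2}}\,|D|^{k-1}\,\frac{L(F,\chi_D,1/2)}{\langle F,F\rangle},
$$
(using $\langle f,f\rangle_{\Gamma_0(4)}=1$), where $F\in S_{2k-1}(\SL2(\BZ))$ is the Shimura lift of $f$; for general $n=|D|m^2$ the ratio $|a_n|^2/|a_{|D|}|^2$ is controlled via the Shimura--Hecke relation together with Deligne's bound $|\lambda_F(m)|\leq d(m)$. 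Invoking the assumed bound $L(F,\chi_D,1/2)\ll_{\eps}(k|D|)^{\eps}$, together with $\langle F,F\rangle\asymp\Gamma(2k-1)/(4\pi)^{2k-1}$ and Legendre duplication, produces
$$
|a_n|\ll_{\eps}(kn)^{\eps}\,n^{(k-1)/2}\,(4\pi)^{k/2}\Gamma(k)^{-1/2}.
$$
Substituting into the Fourier expansion, the summand $n^{(k-1)/2}e^{-2\pi ny}$ peaks at $n\sim(k-1)/(4\pi y)$; a Laplace-type evaluation together with one more application of Stirling yields $y^{k/2}|f(z)|\ll_{\eps}k^{1/4+\eps}y^{-1/2}$, admissible whenever $y\gg 1$.

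For the bulk range, I would construct the reproducing kernel $K^+(z,w)$ of $S^+_k(\Gamma_0(4)^{\star})$ as a half-integral weight Poincar\'e series (with theta multiplier) composed with Kohnen's projector onto the plus space. For an orthonormal basis $\{g_j\}$, positivity of the amplified pre-trace formula gives
$$
y^k|f(z)|^2\,\Bigl|\sum_{\ell}x_\ell\,\lambda_f(\ell^2)\Bigr|^2 \leq y^k\sum_j\Bigl|\sum_{\ell}x_\ell\,\lambda_{g_j}(\ell^2)\,g_j(z)\Bigr|^2,
$$
where $(x_\ell)$ is the Iwaniec amplifier supported on primes $\ell\in[L,2L]$. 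Under the Shimura correspondence each $\lambda_{g_j}(\ell^2)$ equals the Hecke eigenvalue of the integer-weight lift at $\ell$ and is Deligne-bounded, so the standard amplifier choice (together with Rankin--Selberg-type bounds on $\sum_{\ell}\lambda_f(\ell^2)^2$) gives $|\sum_{\ell}x_\ell\lambda_f(\ell^2)|\gg L^{1-\eps}$. Expanding the right-hand side via $T_{\ell_1^2}T_{\ell_2^2}=\sum_{d\mid(\ell_1,\ell_2)}d\,T_{(\ell_1\ell_2/d^2)^2}$ and unfolding the Poincar\'e series reduces the problem to a geometric estimate for half-integral weight Kloosterman sums weighted against a Bessel-type archimedean kernel. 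Standard bounds on the latter yield an estimate of the form $L\cdot k^{1/2+\eps}+L^{C}\cdot k^{1/2-\delta+\eps}$ for explicit $C,\delta>0$; optimising $L$ delivers $y^{k/2}|f(z)|\ll_{\eps}k^{1/4+\eps}$ in the bulk, which combined with the Fourier bound above completes the proof.

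The main obstacle lies in the geometric side of the amplified kernel: the half-integral weight automorphy factor carries a theta multiplier, which introduces an additional Gauss-sum factor into the Kloosterman sums appearing in the off-diagonal contributions. Controlling these sums uniformly in $k$, while simultaneously ensuring that the Kohnen projector does not dilute the amplifier savings, is the technical heart of the argument and is what meaningfully distinguishes the present weight-aspect calculation from Xia's integer-weight analysis.
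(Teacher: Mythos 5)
Your Fourier-expansion half is essentially the paper's entire proof of this theorem, and it is carried out correctly: Kohnen--Zagier plus Lindel\"of gives $|\widehat f(|D|)|\ll_{\epsilon}(4\pi)^{k/2}\Gamma(k)^{-1/2}|D|^{(k-1)/2}k^{\epsilon}$, the relation \eqref{eq:sqrcoeff} with Deligne extends this to all admissible $n$, and the Laplace/Stirling evaluation of $y^{k/2}\sum_n|\widehat f(n)|e^{-2\pi ny}$ does give $\ll_{\epsilon}k^{1/4+\epsilon}y^{-1/2}$ (up to a harmless extra factor $1+yk^{-1/2}$ coming from the sum-versus-integral comparison, which only matters for $y\gg k^{1/2}$ and never spoils the bound for $y\le k$; very large $y$ is trivial). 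The same computation at the cusps $0$ and $\tfrac12$ goes through because, by Lemma \ref{lem:Fourrierpluscusp}, the expansions of $f|_kW_4$ and $f|_kV_4$ are built from the same coefficients $\widehat f(m)$.

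The genuine gap is your ``bulk'' regime. First, it does not exist: a fundamental domain of $\Gamma_0(4)$ is covered by the three cuspidal neighbourhoods $\{\im z\ge\sqrt3/8\}$, $W_4\{\im z\ge\sqrt3/8\}$, $V_4\{\im z\ge\sqrt3/8\}$ (this is the identity \eqref{eq:reducing}), so the Fourier bound with $y\gg1$ at each of the three cusps already controls every point of $\BH$, and under Lindel\"of the theorem follows with no amplification whatsoever. Second, and more seriously, the amplified Bergman-kernel estimate you invoke for the bulk cannot deliver $k^{1/4+\epsilon}$ as claimed: the diagonal ($l=1$) term of the amplified pre-trace formula contributes $\gg k\Lambda$ against a denominator $\Lambda^2$, so to get $y^k|f(z)|^2\ll k^{1/2}$ one would need an amplifier of length $\Lambda\gg k^{1/2}$, at which point the off-diagonal counting terms (of size $\Lambda^2 yk^{-1/2}$, $\Lambda^4k^{-1/2}$, $\Lambda^8k^{-1}$, etc.) are far too large; no ``explicit $C,\delta$'' in the literature rescues this. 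Indeed the optimised amplifier in this paper only yields $k^{3/7+\epsilon}$, and that is used for the unconditional theorem, not this one. So as structured your argument has a hole in the bulk; the repair is not to fix the amplifier but to delete that half and insert the covering observation above.
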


Unconditionally we are able to prove the following.

\begin{thm} Let $k \in \frac{1}{2}+\BZ$ with $k \ge \frac{5}{2}$ and $f \in S^+_k(\Gamma_0(4)^{\star})$ be a $L^2$-normalised Hecke eigenform ($\langle f,f \rangle_{\Gamma_0(4)}=1$) of half-integral weight $k$ contained in the Kohnen plus space. Then we have
$$
\sup_{z \in \BH} y^{\frac{k}{2}}|f(z)| \ll_{\epsilon} k^{\frac{3}{7}+\epsilon}.
$$
\label{thm:1}
\end{thm}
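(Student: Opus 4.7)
The plan is to separate the fundamental domain $\Gamma_0(4)\backslash\BH$ at a height $Y=Y(k)$ and to estimate $y^{k/2}|f(z)|$ by different means in the two regions. The final exponent $\tfrac{3}{7}+\epsilon$ is then obtained by balancing the two bounds together with an internal optimisation of an amplifier length~$L$.

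\textbf{Fourier-expansion bound for $y\ge Y$.} For $z$ high in the cusp I would use $f(z)=\sum_{n\ge1}a(n)e(nz)$ together with the elementary estimate
\[
y^{k/2}|f(z)|\le y^{k/2}\sum_{n\ge1}|a(n)|\,e^{-2\pi ny},
\]
noting that the Gaussian weight localises the effective range of summation to $n\asymp k/y$. By the Kohnen--Zagier formula, for squarefree $n$ the coefficient $|a(n)|^2$ is, up to archimedean factors of explicit size, proportional to $L(F,\chi_{n},k-\tfrac{1}{2})/\sqrt{n}$, where $F$ is the Shimura lift of $f$ and $\chi_n$ the quadratic character attached to $n$; for non-squarefree $n$, Shimura's relations express $a(n)$ in terms of a squarefree coefficient times a Hecke eigenvalue of $F$, controlled by Deligne. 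Cauchy--Schwarz and an unconditional subconvex bound for the quadratic twists $L(F,\chi,k-\tfrac{1}{2})$ in the weight aspect (for example, Heath-Brown's cubic-moment bound) should then convert the sum into an estimate of shape $y^{k/2}|f(z)|\ll_{\epsilon} k^{\alpha+\epsilon}Y^{-\beta}$ with explicit $\alpha,\beta>0$.

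\textbf{Amplified Bergman-kernel bound for $y<Y$.} In the bulk I would complete $f$ to an orthonormal Hecke basis $\{f_j\}$ of $S^+_k(\Gamma_0(4)^\star)$ and form an amplifier $\CA=\sum_{p\asymp L}x_p\,T_{p^2}$ supported on primes $p\asymp L$ (only $T_{p^2}$ acts non-trivially in the half-integral weight plus space), with $x_p$ chosen so that $\CA(f)\gg L^{1-\epsilon}$. Bessel's inequality then gives
\[
|\CA(f)|^2\,y^k|f(z)|^2\le\sum_{j}|\CA(f_j)|^2\,y^k|f_j(z)|^2.
\]
Expanding $|\CA|^2$ as a linear combination of $T_{m^2}$ with $m\le L^2$ and invoking the reproducing property of the weight-$k$ Bergman kernel of $S^+_k(\Gamma_0(4)^\star)$, the right-hand side becomes a sum of Poincaré-type kernels over the double cosets of $\Gamma_0(4)$ defining $T_{m^2}$, with the theta multiplier and plus-space projector inserted; the factors $(cz+d)^{-k}$ supply decay off the identity coset. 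The identity coset gives a diagonal contribution $\ll L^2 k/y$, while the remaining cosets are bounded through a lattice-point count for integer matrices of determinant $m^2$ with $|cz+d|$ small, respecting the level-$4$ and multiplier-system constraints. Dividing by $|\CA(f)|^2$ yields $y^k|f(z)|^2\ll k^\epsilon(k/L+\text{off-diagonal})$.

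\textbf{Optimisation and main obstacle.} Balancing the two bounds at $y\asymp Y$ and optimising jointly in $Y$ and $L$ then fixes the exponent at $\tfrac{3}{7}+\epsilon$. The principal difficulty is the geometric side of the amplified Bergman-kernel estimate: one must count integer matrices of determinant $m^2$, subject to the level and plus-space constraints, for which $cz+d$ lies close to the real line, and extract enough saving from the oscillatory automorphy factor $(cz+d)^{-k}$ to match the amplifier. A secondary, but important, point is handling the non-squarefree Fourier coefficients via Shimura's multiplicative relations without losing the subconvexity input, so that the resulting Fourier-side exponent combines cleanly with the kernel side to produce $3/7$ rather than a weaker value.
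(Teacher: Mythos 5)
Your overall architecture is the same as the paper's: a Fourier-expansion bound high in the cusps, an amplified Bergman-kernel bound in the bulk, and a balance of the two. However, two steps as you describe them would not go through. First, the amplifier $\CA=\sum_{p\asymp L}x_pT_{p^2}$ supported only on the operators $T_{p^2}$ cannot be guaranteed to satisfy $\CA(f)\gg L^{1-\epsilon}$: choosing $x_p=\sign(A_f(p^2))$ only produces $\sum_{p}|A_f(p^2)|$, and the normalised eigenvalues $A_f(p^2)$ may be small for many $p$. The standard remedy, which the paper uses, is to run the inequality for the two sets $\{p^2\}$ and $\{p^4\}$ and invoke the Hecke relation to get $\max(|A_f(p^2)|,|A_f(p^4)|)\gg 1$. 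This is not cosmetic: including $l$ up to $\Lambda^8$ in the off-diagonal is exactly what produces the term $\Lambda^6k^{-1}$ in \eqref{eq:sup}, forces $\Lambda=k^{1/7}$, and yields the exponent $\frac37$; with $l\le\Lambda^4$ only, the numerology would be different (and better), so your claimed final exponent does not match your own amplifier. Second, on the Fourier side the input you need is a \emph{hybrid} Weyl-type bound $L(F,\legendre{D}{\cdot},\frac12)\ll k^{1/3+\epsilon}|D|^{1/3+\epsilon}$, uniform in both the weight and the conductor (Petrow, Young, after Conrey--Iwaniec), not a weight-aspect bound and not Heath-Brown's result for quadratic Dirichlet $L$-functions. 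The conductor aspect is essential because the effective range is $|D|\asymp k/y$, and it is the exponent $\beta$ in the conductor that supplies the decay $y^{-1/2-\beta/2}$ allowing the Fourier bound to win already for $y\ge k^{1/4}$, which is the height at which the two regimes are glued.

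Two smaller points. The group $\Gamma_0(4)$ has three cusps, and you must control $f|_kW_4$ and $f|_kV_4$ as well as $f$ itself; for the plus space this is handled by Kohnen's relations expressing the coefficients at $0$ and $\frac12$ through those at $\infty$ (Lemma \ref{lem:Fourrierpluscusp}), and on the kernel side by conjugating the count by $W_4,V_4$, which stabilise the relevant matrix sets $G_l(4)$ for odd $l$. Finally, the ``principal difficulty'' you identify --- extracting cancellation from the oscillation of $(cz+d)^{-k}$ --- is misplaced: no oscillation is exploited. One uses only the super-polynomial decay of $(1+u(\gamma z,z))^{-k/2}$ once $u(\gamma z,z)\ge k^{-1+\eta}$, reducing everything to counting matrices of determinant $l$ with $u(\gamma z,z)\le k^{-1+\eta}$, for which the Iwaniec--Sarnak and Templier counting lemmas apply essentially off the shelf.
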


We now give a brief overview of the significance of the various exponents and the methods which go into them. If $f$ is not assumed to be a Hecke eigenform, then the best exponent one can prove in general is $k^{\frac{3}{4}}$. Indeed this has been shown for arbitrary real weight $k$ by the author \cite{realweight} and relies on estimates for the Fourier coefficients of Poincar\'e series. However, when $f$ is an eigenform of half-integral weight as in the current paper, it follows from a result of Kohnen and Zagier (or more generally Waldspurger) that the square of its Fourier coefficients are essentially central $L$-values. Using the convexity bound on said $L$-functions one achieves a bound for the Fourier expansion, which is especially good near the cusps. Combing this estimate with a Bergman kernel for the case away from the cusps gives the bound $k^{\frac{1}{2}+\epsilon}$ for the supnorm. Any sub-convexity result on those central $L$-values easily allows the removal of the $\epsilon$ to achieve the bound $k^{\frac{1}{2}}$; this was shown by the author in his master's thesis. To decrease the exponent further one can either use deeper techniques or one can assume unproven bounds, e.g. the Lindel\"of hypothesis as in Theorem \ref{thm:2}. The bound $k^{\frac{1}{4}+\epsilon}$ is essentially best possible as the next theorem shows that the best uniform bound one can hope for is $k^{\frac{1}{4}}$, if one takes the dimension of the space into consideration. The bound $k^{\frac{3}{7}}$ comes from combining the best known bound for these central $L$-values given by Petrow \cite{Pet14} and Young \cite{Young14} combined with the amplification method using the Bergman kernel.

\begin{thm}Let $k \in \frac{1}{2}+\BZ$ with $k \ge \frac{5}{2}$ and $\{f_j\} \subseteq S^+_k(\Gamma_0(4)^{\star})$ be an orthonormal basis of Hecke eigenforms of half-integral weight $k$ contained in the Kohnen plus space. Let $\{F_j\} \subseteq S_{2k-1}(\SL2(\BZ))$ be the corresponding arithmetically normalised Hecke eigenforms ($\widehat{F_j}(1)=1$) under the Shimura map. Then we have the following lower bounds:
$$
\sup_{z \in \BH} y^{\frac{k}{2}}|f_j(z)| \gg_{\epsilon} \max\left\{1, \ k^{\frac{1}{4}-\epsilon} \sup_{\substack{D \text{ fund. disc.},\\ (-1)^{k-\frac{1}{2}}D>0}} L(F_j,\slegendre{D}{\cdot},1/2)^{\frac{1}{2}} |D|^{-\frac{1}{2}}\right\},
$$
\vspace{4mm}
$$
\sum_j \sup_{z \in \BH} y^k |f_j(z)|^2 \ge \sup_{z \in \BH} \sum_j y^k |f_j(z)|^2 \gg k^{\frac{3}{2}}.
$$
\label{thm:3}
\end{thm}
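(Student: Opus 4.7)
My plan is to prove the three asserted lower bounds in turn. The first, $\sup_z y^{k/2}|f_j(z)| \gg 1$, is immediate from the $L^2$-normalisation and the bounded volume of the fundamental domain: $1 = \|f_j\|^2 \leq \mathrm{vol}(\Gamma_0(4)\backslash\BH)\cdot\sup_z y^k|f_j(z)|^2$.

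For the $L$-value lower bound I would begin with the Fourier-isolation inequality
$$
y^{k/2}|c_j(n)|e^{-2\pi ny} \leq \sup_{z\in\BH} y^{k/2}|f_j(z)|,
$$
optimise the left-hand prefactor by taking $y = k/(4\pi n)$, and specialise to $n = |D|$ for a fundamental discriminant $D$ with $(-1)^{k-1/2}D>0$. The Kohnen-Zagier formula then gives (using $\|f_j\|=1$)
$$
|c_j(|D|)|^2 = \frac{\Gamma(k-1/2)}{\pi^{k-1/2}}|D|^{k-1}\frac{L(F_j,\chi_D,1/2)}{\|F_j\|^2},
$$
while the Rankin-Selberg formula yields $\|F_j\|^2 \asymp \Gamma(2k-1)(4\pi)^{1-2k}\,L(\mathrm{Sym}^2F_j,1)$, with $L(\mathrm{Sym}^2F_j,1)\ll_\epsilon k^\epsilon$ by convexity. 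The Legendre duplication identity $\Gamma(k-1/2)\Gamma(k)=2^{2-2k}\sqrt{\pi}\,\Gamma(2k-1)$ collapses the product of $\Gamma$-factors to $\Gamma(k)^{-1/2}$ up to absolute constants, and Stirling turns the remaining $k^{k/2}e^{-k/2}\Gamma(k)^{-1/2}$ into $k^{1/4}$; all $|D|^k$- and exponential-in-$k$ factors cancel identically, yielding the claimed $k^{1/4-\epsilon}L(F_j,\chi_D,1/2)^{1/2}|D|^{-1/2}$.

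For the final chain of inequalities the relation $\sum_j\sup\geq\sup\sum_j$ is tautological, so only the right-most step needs work. I would apply Fourier isolation to the kernel $\sum_j|f_j(z)|^2$ itself: by Parseval, for any $y>0$ and any integer $n$ compatible with the Kohnen plus-space parity condition,
$$
y^k\sum_j|c_j(n)|^2 e^{-4\pi ny} \leq y^k\int_0^1\sum_j|f_j(x+iy)|^2\,dx \leq \sup_z y^k\sum_j|f_j(z)|^2.
$$
The Petersson-Kohnen trace formula on $S_k^+(\Gamma_0(4)^{\star})$ evaluates the diagonal as $\sum_j|c_j(n)|^2 = (4\pi n)^{k-1}\Gamma(k-1)^{-1}(1+O(k^{-A}))$ for any fixed $A>0$, since the off-diagonal half-integral Kloosterman-Bessel contributions decay super-polynomially in $k$ (via $J_{k-1}(x)\ll(ex/(2k))^{k-1}$ for bounded $x$). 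Choosing $y = k/(4\pi n)$ and applying Stirling to $\Gamma(k-1)$ yields $k^k e^{-k}/\Gamma(k-1)\asymp k^{3/2}/\sqrt{2\pi}$, which is the desired $\gg k^{3/2}$.

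The main obstacle is not conceptual but bookkeeping: both the Kohnen-Zagier and Petersson-Kohnen formulae carry numerical constants depending on the precise projector onto the plus space, the Shimura correspondence, and the arithmetic-versus-$L^2$ normalisation, and these must combine cleanly with Stirling and Legendre duplication to produce the exponents $\tfrac14$ and $\tfrac32$ without residual $k$-factors.
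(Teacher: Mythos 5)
Your proposal is correct and follows essentially the same route as the paper: the trivial $L^2$ bound, Fourier isolation at $n=|D|$ with $y=k/(4\pi|D|)$ combined with Kohnen--Zagier, the Rankin--Selberg norm formula and $L(\mathrm{Sym}^2F_j,1)\ll_\epsilon k^\epsilon$, and for the third bound the Petersson--Kohnen formula at $n=1$ with the off-diagonal Bessel terms killed by $J_{k-1}(x)\ll (x/2)^{k-1}/\Gamma(k)$ and $y=k/(4\pi)$. The only cosmetic difference is that you isolate $\sum_j|c_j(1)|^2$ via Parseval where the paper uses Cauchy--Schwarz on $\int_0^1 y^{k/2}|f_j|\,dx$; the computations are identical.
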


Although we restrict ourselves in this paper to the Kohnen plus space of level 4, the methods certainly generalise to larger level, but slightly weaker results are to be expected. Nevertheless the author strongly believes that even the convexity bound on the critical value of the corresponding $L$-functions are sufficient to break the convexity bound of $k^{\frac{3}{4}}$ as this is indeed the case in the Kohnen plus space of level 4.
\newpage
\section{Notation and Preliminaries} Throughout let $k\in \frac{1}{2}+\BZ$ be a half-integer with $k \ge \frac{5}{2}$. For a complex number $z\in \BC^{\times}$ we define $z^k=\exp(k\cdot \Log(z))$, where $\Log(z)=\log|z|+i\arg(z)$ with $-\pi < \arg(z)\le \pi$. The notation $f(x) \ll_{A,B} g(x)$ means that $|f(x)|\le K g(x)$, where $K$ is some function depending at most on $A$ and $B$. Further let $e(z)=\exp(2 \pi i z)$ for $z \in \BC$.\\

As usual we define the M\"obius action of $\gamma \in \GL2^{+}(\BQ)$, the set of all $2\times 2$ matrices with rational coefficients and positive determinant, on $\BH$, the upper half plane, as
$$
\gamma \cdot z= \gamma z = \frac{az+b}{cz+d}, \quad \forall \gamma = \begin{pmatrix} a & b \\ c & d \end{pmatrix} \in \GL2^{+}(\BQ), \forall z \in \BH.
$$
The action is extended to the set of cusps $\overline{\BQ}=\BP^{1}(\BQ)=\BQ \sqcup \{\infty\}$. We further define
$$
j(\gamma,z)=cz+d, \quad \forall \gamma= \begin{pmatrix} a & b \\ c & d \end{pmatrix} \in \GL2^{+}(\BQ), \forall z \in \BH
$$
and
$$
j_{\Theta}(\gamma,z)=\frac{\Theta(\gamma z)}{\Theta(z)}, \quad \forall \gamma= \begin{pmatrix} a & b \\ c & d \end{pmatrix} \in \Gamma_0(4), \forall z \in \BH,
$$
where $\Theta(z)=\sum_{n\in \BZ}e(n^2 z)$. By $\FS_k$ we denote the group, whose elements are of the form $(\gamma, \phi)$, where $\gamma \in \GL2^{+}(\BQ)$ and $\phi:\BH \to \BC$ a holomorphic function with $|\phi(z)|=(\det \gamma)^{-\frac{k}{2}} |j(\gamma,z)|^k$, and whose composition is given by:
$$
(\gamma,\phi)\circ(\gamma^{'},\phi^{'})=(\gamma \gamma^{'}, (\phi \circ \gamma^{'})\cdot \phi^{'}).
$$
For each $k$ we have a group homomorphism
$$\begin{aligned}
^{\star}: \Gamma_0(4)& \to \FS_k \\
\gamma & \mapsto \gamma^{\star}=(\gamma,j_{\Theta}(\gamma,\cdot)^{2k}).
\end{aligned}$$
We further have an inclusion as sets $\GL2^+(\BQ) \hookrightarrow \FS_k$, where we identify the element $\gamma \in \GL2^+(\BQ)$ with $(\gamma,(\det \gamma)^{-\frac{k}{2}} j(\gamma,\cdot)^k)$. Among all elements in $\FS_k$ we would like to distinguish two special elements $W_4$ and $V_4$, which we are going to use to translate the cusps $0,\frac{1}{2}$ to $\infty$,
$$\begin{aligned}
W_4 &= \left( \begin{pmatrix} 0 & -\frac{1}{2} \\ 2 & 0 \end{pmatrix}, (-2iz)^k \right),\\
V_4 &= \left( \begin{pmatrix} 1 & 0 \\ 2 & 1 \end{pmatrix}, (-i(2z+1))^k \right).
\end{aligned}$$

\begin{defn} For $\tau \in \SL2(\BZ)$ we define the \emph{cusp width} $n_{\tau}$ and the \emph{cusp parameter} $\kappa_{\tau} \in [0,1)$ in such a way that the stabilizer group at $\infty$ of $\tau^{-1}\Gamma_0(4)^{\star}\tau$ is generated by
$$
\pm \left(\begin{pmatrix} 1 & n_{\tau} \\ 0 & 1 \end{pmatrix}, e(\kappa_{\tau})  \right).
$$
\end{defn}

\begin{rem} For $\Gamma_0(4)^{\star}$, the cusps $0,\frac{1}{2},\infty$ have cusp width $4,1,1$ and cusp parameter $0,\frac{1}{2}-(-1)^{k-\frac{1}{2}}\frac{1}{4},0$ respectively.
\end{rem}

The group $\FS_k$ acts on the set of meromorphic functions on $\BH$ as follows:
$$
(f|_k (\gamma, \phi))(z)=\phi(z)^{-1} f(\gamma z).
$$
\begin{defn} A holomorphic function $f$ on the upper half-plane satisfying
$$
f|_k \xi=f, \quad \forall \xi \in \Gamma_0(4)^{\star},
$$
and having a Fourier expansion of the form
$$
(f|_k \tau)(z)=\sum_{m+\kappa_{\tau}>0} \widehat{(f|_k\tau)}(m) \, e\!\left(\frac{m+\kappa_{\tau}}{n_{\tau}}z\right)
$$
for every $\tau \in \SL2(\BZ)$ is called a \emph{cusp form} of weight $k$ with respect to $\Gamma_0(4)^{\star}$. The set of such functions we denote by $S_k(\Gamma_0(4)^{\star})$.
\end{defn}

The space $S_k(\Gamma_0(4)^{\star})$ is finite dimensional and can be made into a Hilbert space by defining the Petersson inner product:
$$
\langle f,g \rangle_{\Gamma_0(4)} = \frac{1}{6} \int_{\BF_{\Gamma_0(4)}} f(z)\overline{g(z)}y^k \frac{dxdy}{y^2},
$$
where $\BF_{\Gamma_0(4)}$ is a fundamental domain for $\Gamma_0(4)$ and $z=x+iy$. Furthermore a theory of Hecke operators can be established on $S_k(\Gamma_0(4)^{\star})$. For $l$ a square, one defines
$$
f|_k T(l)= l^{\frac{k}{2}-1} \sum_{\xi \in \rquotient{\Gamma_0(4)^{\star}}{\Gamma_0(4)^{\star}\xi_l \Gamma_0(4)^{\star}}} f|_k \xi,
$$
where
$$
\xi_l= \left( \begin{pmatrix} 1 & 0 \\ 0 & l \end{pmatrix}, l^{\frac{k}{2}} \right).
$$
These operators commute and thus one gets an orthonormal basis of Hecke eigenforms. Shimura \cite{MFhiw} has shown, that given such a Hecke eigenform $f$ one can use its Fourier coeficients to construct a classical Hecke eigenform $F \in S_{2k-1}(\Gamma_0(M))$ of weight $2k-1$ for some level $M$ with the same Hecke eigenvalues. Later Niwa \cite{MFohiwatIocTF} has shown that one can always take $M=2$, moreover Kohnen \cite{Ko80} has shown one can take $M=1$ if the eigenform is coming from a certain subspace, the Kohnen plus space, which is defined as follows:
$$
S^+_k(\Gamma_0(4)^{\star})= \{f \in S_k(\Gamma_0(4)^{\star}) | \widehat{f}(m)=0, \, \forall m \text{ such that } (-1)^{k-\frac{1}{2}}n \equiv 2,3 \mod(4) \}.
$$

The plus space has some nice properties, one of which is that it comes with a projection $S_k(\Gamma_0(4)^{\star}) \to S^+_k(\Gamma_0(4)^{\star})$. For this reason the subspace has its own Poincar\'e series, which have been computed by Kohnen \cite{Ko85}.

\begin{prop} Let $k\in \frac{1}{2}+ \BZ$ with $k \ge \frac{5}{2}$ and $m \in \BN, (-1)^{k-\frac{1}{2}}m \equiv 0,1 \mod(4)$. The Poincar\'e series $G_I^+( \Gamma_0(4)^{\star},k,z,m )$ given by the Fourier expansion:
$$
G_I^+( \Gamma_0(4)^{\star},k,z,m ) = \sum_{\substack{n \ge 1, \\ (-1)^{k-\frac{1}{2}}n \equiv 0,1 \mod(4)}}g_{k,m}(n)e^{2 \pi i n z},
$$
with
$$
g_{k,m}(n)=\frac{2}{3} \left[\delta_{m,n}+(-1)^{ \left\lfloor \frac{k+\frac{1}{2}}{2} \right\rfloor} \pi \sqrt{2} \left( \frac{n}{m}\right)^{\frac{k-1}{2}} \sum_{c \ge 1}H_c(n,m)J_{k-1}\left( \frac{\pi}{c} \sqrt{nm} \right) \right],
$$
where $H_c(n,m)$ is given by
$$\begin{aligned}
H_c(n,m) &=(1-(-1)^{k-\frac{1}{2}}i) \left(1+\legendre{4}{c} \right)\frac{1}{4c} \sum_{\substack{\delta \mod(4c),\\ (\delta,4c)=1}}\legendre{4c}{\delta}\legendre{-4}{\delta}^{k} e\left(\frac{n\delta+m\delta^{-1}}{4c} \right),
\end{aligned}$$
satisfy
$$
\langle f, G_I^+(\Gamma_0(4)^{\star},k, \cdot ,m) \rangle_{\Gamma_0(4)} = \frac{\Gamma(k-1)}{6\cdot(4 \pi m)^{k-1}} \widehat{f}(m) \quad \forall f \in S^+_k(\Gamma_0(4)^{\star})
$$
and $G_I^+(\Gamma_0(4)^{\star},k, \cdot ,m) \in S^+_k(\Gamma_0(4)^{\star})$.
\end{prop}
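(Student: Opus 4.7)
The plan is to realise $G_I^+(\Gamma_0(4)^\star,k,\cdot,m)$ as the Kohnen plus-space projection of the ordinary half-integral weight Poincaré series for $\Gamma_0(4)^\star$ of index $m$, read off its Fourier expansion from the Bruhat decomposition, and then match with Kohnen's calculation in \cite{Ko85}.

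First I would form
$$
P_m(z)=\sum_{\gamma\in\Gamma_\infty^\star\backslash\Gamma_0(4)^\star}\bigl(e(m\,\cdot)\,|_k\,\gamma\bigr)(z),
$$
where $\Gamma_\infty^\star$ is the stabiliser of $\infty$ inside $\Gamma_0(4)^\star$. For $k\ge\tfrac{5}{2}$ this converges absolutely and lies in $S_k(\Gamma_0(4)^\star)$, and unfolding against any $f\in S_k(\Gamma_0(4)^\star)$ produces the reproducing identity
$$
\langle f,P_m\rangle_{\Gamma_0(4)}=\frac{\Gamma(k-1)}{6(4\pi m)^{k-1}}\widehat f(m).
$$
Parametrising the non-trivial cosets by their bottom row $(c,d)$ with $4\mid c$ and applying Poisson summation in $d$ gives a Fourier expansion
$$
P_m(z)=e(mz)+\sum_{n\ge 1}\Bigl[2\pi(-i)^k(n/m)^{\tfrac{k-1}{2}}\sum_{c>0,\,4\mid c}\tfrac{K_\Theta(n,m;c)}{c}J_{k-1}\!\Bigl(\tfrac{4\pi\sqrt{nm}}{c}\Bigr)\Bigr]e(nz),
$$
in which $K_\Theta(n,m;c)$ is the Kloosterman-type sum twisted by the theta multiplier $j_\Theta(\gamma,\cdot)^{2k}$, i.e.\ the Salié-type character sum whose symmetrisation will give rise to $H_c(n,m)$.

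Next I apply the orthogonal projection $\mathrm{pr}^+\colon S_k(\Gamma_0(4)^\star)\to S_k^+(\Gamma_0(4)^\star)$ onto the Kohnen plus space; Kohnen \cite{Ko80} realises it explicitly as a linear combination of the identity with operators built from $W_4$, $U_4$ and translations. Setting $G_I^+(\Gamma_0(4)^\star,k,\cdot,m):=\mathrm{pr}^+(P_m)$, self-adjointness of the orthogonal projection instantly gives
$$
\langle f,G_I^+(\Gamma_0(4)^\star,k,\cdot,m)\rangle_{\Gamma_0(4)}=\langle\mathrm{pr}^+f,P_m\rangle_{\Gamma_0(4)}=\langle f,P_m\rangle_{\Gamma_0(4)}
$$
for all $f\in S_k^+(\Gamma_0(4)^\star)$, which is the claimed reproducing property. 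Membership in $S_k^+$ forces vanishing of Fourier coefficients at $n$ with $(-1)^{k-1/2}n\equiv 2,3\pmod 4$, so only the stated indices survive; and pushing $\mathrm{pr}^+$ through the Fourier expansion of $P_m$ collapses $K_\Theta$ into the symmetric form $H_c(n,m)$: the projector contributes the global constant $\tfrac{2}{3}$, the complex factor $(1-(-1)^{k-1/2}i)$, and the arithmetic factor $\bigl(1+\bigl(\tfrac{4}{c}\bigr)\bigr)$ per term.

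The hardest step is tracking the eighth-power roots of unity appearing both in $\epsilon_\delta$ inside $K_\Theta$ and in the phases produced by $\mathrm{pr}^+$; their combined phase is the sign $(-1)^{\lfloor(k+1/2)/2\rfloor}$ together with the factor $(1-(-1)^{k-1/2}i)$ displayed in $H_c(n,m)$. Everything else—absolute convergence for $k\ge\tfrac{5}{2}$, the Bruhat decomposition, Poisson summation, and the final matching—is routine. The resulting formula is precisely that of \cite{Ko85} in the present normalisation.
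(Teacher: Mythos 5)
Your proposal is correct and follows essentially the same route as the paper, which simply defers to Proposition 4 of \cite{Ko85}: Kohnen's proof there is exactly your construction, namely defining $G_I^+$ as the image of the classical index-$m$ Poincar\'e series under the (self-adjoint) plus-space projection, deducing the reproducing property by unfolding plus self-adjointness, and computing the coefficients by pushing the projector through the Petersson--Sali\'e expansion. The only part you leave schematic, the bookkeeping of the eighth roots of unity that produces $(-1)^{\lfloor (k+1/2)/2\rfloor}$ and the factor $(1-(-1)^{k-1/2}i)\bigl(1+\bigl(\tfrac{4}{c}\bigr)\bigr)$ in $H_c(n,m)$, is precisely the content of Kohnen's computation, so your sketch matches the cited argument.
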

\begin{proof} See Proposition 4 of \cite{Ko85}.
\end{proof}
The following Corollary is immediate.

\begin{cor} Let $k \in \frac{1}{2}+\BZ$ with $k \ge \frac{5}{2}$ and $\{f_j\}$ be an orthonormal basis of $S^+_k(\Gamma_0(4)^{\star})$, then we have
$$
\sum_j \left|\widehat{f_j}(m)\right|^2=\frac{6 \cdot (4\pi m)^{k-1}}{\Gamma(k-1)} \cdot \frac{2}{3} \left[1+(-1)^{ \left\lfloor \frac{k+\frac{1}{2}}{2} \right\rfloor} \pi \sqrt{2} \sum_{c \ge 1}H_c(m,m)J_{k-1}\left( \frac{\pi m}{c} \right) \right].
$$
\label{cor:halfpluscoeff}
\end{cor}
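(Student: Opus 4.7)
The plan is to run the standard Petersson-type argument: expand the Poincaré series $G_I^+(\Gamma_0(4)^\star,k,\cdot,m)$ in the orthonormal basis $\{f_j\}$ and then read off the $m$-th Fourier coefficient of both sides. Since $G_I^+(\Gamma_0(4)^\star,k,\cdot,m) \in S^+_k(\Gamma_0(4)^\star)$ and $\{f_j\}$ is an orthonormal basis for this finite-dimensional Hilbert space, I can write
$$
G_I^+(\Gamma_0(4)^\star,k,z,m) = \sum_j \langle G_I^+(\Gamma_0(4)^\star,k,\cdot,m), f_j \rangle_{\Gamma_0(4)} \, f_j(z).
$$

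Next I would compute the inner product by invoking the reproducing property stated in the previous proposition: one has $\langle f_j, G_I^+(\Gamma_0(4)^\star,k,\cdot,m) \rangle_{\Gamma_0(4)} = \frac{\Gamma(k-1)}{6(4\pi m)^{k-1}} \widehat{f_j}(m)$, and conjugating gives $\langle G_I^+(\Gamma_0(4)^\star,k,\cdot,m), f_j \rangle_{\Gamma_0(4)} = \frac{\Gamma(k-1)}{6(4\pi m)^{k-1}} \overline{\widehat{f_j}(m)}$ (the prefactor is real). Substituting back yields
$$
G_I^+(\Gamma_0(4)^\star,k,z,m) = \frac{\Gamma(k-1)}{6(4\pi m)^{k-1}} \sum_j \overline{\widehat{f_j}(m)}\, f_j(z).
$$

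Now I would compare the $m$-th Fourier coefficient of both sides. The left-hand side contributes $g_{k,m}(m)$ by the proposition, while the right-hand side contributes $\frac{\Gamma(k-1)}{6(4\pi m)^{k-1}}\sum_j \overline{\widehat{f_j}(m)}\,\widehat{f_j}(m) = \frac{\Gamma(k-1)}{6(4\pi m)^{k-1}}\sum_j |\widehat{f_j}(m)|^2$. Solving and plugging in the explicit expression
$$
g_{k,m}(m)=\frac{2}{3}\left[1+(-1)^{\lfloor(k+\tfrac12)/2\rfloor}\pi\sqrt{2}\sum_{c\ge 1} H_c(m,m)J_{k-1}\!\left(\tfrac{\pi m}{c}\right)\right]
$$
(the Kronecker $\delta_{m,m}=1$) produces the claimed identity.

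There is no real obstacle — this is a routine consequence of the reproducing property of Poincaré series against an orthonormal basis, and the only minor point to keep track of is taking complex conjugates correctly so that $|\widehat{f_j}(m)|^2$ appears on the right-hand side.
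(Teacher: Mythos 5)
Your argument is correct and is exactly the intended ``immediate'' deduction in the paper: expand $G_I^+(\Gamma_0(4)^{\star},k,\cdot,m)$ in the orthonormal basis via the reproducing property, extract the $m$-th Fourier coefficient of both sides, and use $\delta_{m,m}=1$ in the formula for $g_{k,m}(m)$. The conjugation bookkeeping is handled correctly, so nothing is missing.
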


Furthermore cusp forms in the Kohnen plus space have special relations among their Fourier coefficients at different cusps as the next lemma shows.

\begin{lem} Let $k \in \frac{1}{2}+\BZ, f \in S^+_k(\Gamma_0(4)^{\star})$. Then the Fourier coefficients of $f$ at the cusps $0,\frac{1}{2}$ can be given in terms of the Fourier coefficients at $\infty$:
$$\begin{aligned}
(f|_k W_4)(z) &= \legendre{2}{2k}2^{\frac{1}{2}-k} \sum_{m \ge 1} \widehat{f}(4m)e(mz), \\
(f|_k V_4)(z) &= \legendre{2}{2k}2^{\frac{1}{2}-k} \sum_{\substack{m \ge 1, \\ (-1)^{k-\frac{1}{2}}m \equiv 1 \mod (4)}} i^{\frac{m}{2}}\widehat{f}(m)e\left(\frac{m}{4}z\right).
\end{aligned}$$
We note here that $\legendre{2}{2k}$ denotes the Jacobi symbol.
\label{lem:Fourrierpluscusp}
\end{lem}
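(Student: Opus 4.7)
The plan is to identify the operators $W_4$ and $V_4$ --- which move the cusps $0$ and $\tfrac{1}{2}$ to $\infty$ --- with simple Fourier-coefficient-extracting operations at $\infty$, exploiting the rigidity built into the Kohnen plus space. First I would unwind the definitions: $(f|_k W_4)(z) = (-2iz)^{-k} f(-1/(4z))$, so that the Fourier expansion of $f|_k W_4$ at $\infty$ encodes that of $f$ at the cusp $0$. A priori this mixes all coefficients $\widehat{f}(m)$ with $(-1)^{k-\frac{1}{2}} m \equiv 0, 1 \pmod 4$, but the crucial input from the plus space is Kohnen's characterisation (see \cite{Ko80}, \cite{Ko85}), which presents the plus-space projector as an explicit linear combination of slash operators over cosets in $\Gamma_0(4) \backslash \SL2(\BZ)$. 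Combined with the transformation of $\Theta^{2k}$ under $W_4$ --- which, raised to the half-integral weight $k$, produces the Jacobi symbol $\legendre{2}{2k}$ and the power $2^{\frac{1}{2}-k}$ --- this collapses the double sum to the claimed subseries over $\widehat{f}(4m)$ alone, corresponding to the sub-condition $m \equiv 0 \pmod 4$ within the plus-space support.

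For the cusp $\tfrac{1}{2}$ I would use a factorisation in $\FS_k$ of the form
\[
V_4 \;=\; W_4 \circ \left(\begin{pmatrix} 1 & \tfrac{1}{2} \\ 0 & 1 \end{pmatrix},\, c(z)\right),
\]
where the right factor is translation by $\tfrac{1}{2}$ and $c(z)$ is determined by the cocycle condition on the $\phi$-components. Substituting the formula from the $W_4$ step introduces the extra factor $e(m/2) = (-1)^m$; combined with the plus-space parity condition and the root-of-unity contribution from $c$, this selects precisely $(-1)^{k-\frac{1}{2}} m \equiv 1 \pmod 4$ and reorganises into $i^{m/2}$ after absorbing the remaining roots of unity. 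Equivalently, one can compute $(f|_k V_4)(z) = (-i(2z+1))^{-k} f\!\left(\tfrac{z}{2z+1}\right)$ directly and apply the same Kohnen-projector identity, tracking the different $\Theta$-multiplier at this cusp.

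The main technical obstacle is the bookkeeping of the various multiplier systems: the automorphy factor $j(\gamma,z)^k$, the $\Theta$-multiplier $j_\Theta(\gamma,z)^{2k}$, Kohnen's Dirichlet-type character, and the $4$-th and $8$-th roots of unity that appear in the half-integral weight theta transformation must all combine cleanly to the constant $\legendre{2}{2k}2^{\frac{1}{2}-k}$. To pin down these constants unambiguously I would verify the identities against the Poincar\'e basis $G_I^+(\Gamma_0(4)^{\star}, k, \cdot, m)$: its expansion at $\infty$ is given above in terms of the Kloosterman-type sums $H_c(n,m)$, and its expansions at $0$ and $\tfrac{1}{2}$ can be extracted by the standard double-coset unfolding, after which matching to the formulas in the lemma determines every constant.
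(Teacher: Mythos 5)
Your handling of the $W_4$ identity is essentially the paper's argument: the entire content is Kohnen's relation $f|U_4|_kW_4=\legendre{2}{2k}2^{k-\frac12}f$ from Proposition 2 of \cite{Ko80}, and since $|_kW_4^2$ acts as the identity one simply applies $|_kW_4$ to both sides. Your framing via the plus-space projector is a more roundabout packaging of the same input, and the constant $\legendre{2}{2k}2^{\frac12-k}$ is imported wholesale from Kohnen rather than produced by the theta transformation under $W_4$ (theta inversion alone gives no Jacobi symbol); that is fine, but be aware you are citing the constant, not deriving it.

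For $V_4$ there is a genuine gap. First, the proposed factorisation $V_4=W_4\circ\bigl(\begin{smallmatrix}1&1/2\\0&1\end{smallmatrix}\bigr)$ is false at the matrix level: $\bigl(\begin{smallmatrix}0&-1/2\\2&0\end{smallmatrix}\bigr)\bigl(\begin{smallmatrix}1&1/2\\0&1\end{smallmatrix}\bigr)=\bigl(\begin{smallmatrix}0&-1/2\\2&1\end{smallmatrix}\bigr)\neq\bigl(\begin{smallmatrix}1&0\\2&1\end{smallmatrix}\bigr)$; the correct identity is $V_4=T_{1/2}W_4T_{1/2}$ with a translation on \emph{both} sides. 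As written, your factorisation would give $f|_kV_4=(f|_kW_4)(z+\tfrac12)$ up to a constant, a series in $e(mz)$ with integral frequencies, which cannot equal the claimed series in $e(\tfrac m4 z)$ supported on odd $m$. Second, even after correcting this, the assertion that ``the factor $(-1)^m$ combined with the parity condition selects $(-1)^{k-\frac12}m\equiv1\pmod 4$'' is not an argument: $(-1)^m$ annihilates nothing. The actual mechanism (which is the paper's computation) is that on the plus-space support $e(m/2)$ equals $1$ for $4\mid m$ and $-1$ for odd $m$, so one writes the twisted sum as $2\sum_{4\mid m}-\sum_{\text{all}}$; the piece $\sum_{4\mid m}$ is recognised as $f|U_4$ at a transformed point and converted back to $f\bigl(\tfrac{z+1/2}{4}\bigr)$ via Kohnen's relation, whereupon its $4\mid m$ part cancels against $(f|_kW_4)(z+\tfrac12)$ and only the odd coefficients survive, the factor $i^{m/2}$ arising as $e(m/8)$. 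Your proposal does not contain this cancellation step, and without it the selection of the odd coefficients is unexplained. The Poincar\'e-series cross-check you suggest is a legitimate but much heavier way to pin the constants and is not needed once the direct computation is done correctly.
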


\begin{proof} In \cite{Ko80} Prop. 2 Kohnen showed: $(f| U_4 |_k W_4)(z)= \legendre{2}{2k} 2^{k-\frac{1}{2}} f(z)$, where $(f|U_4)(z)=\sum_{m\ge 1} \widehat{f}(4m)e(mz)$.
Applying $|_k W_4$ to both sides gives the desired result, by noting that $|_k W_4^2$ is the identity map. The second identity follows from:
$$ \begin{aligned}
(f|_k V_4)(z) &= (-i(2z+1))^{-k} \sum_{\substack{m \ge 1,\\ (-1)^{k-\frac{1}{2}}m \equiv 0,1 \mod(4)}}\widehat{f}(m) e\left(\frac{m}{2}-\frac{m}{4z+2}\right) \\
&= (-i(2z+1))^{-k} \left[ 2 \sum_{\substack{m \ge 1,\\ m \equiv 0 \mod(4)}} -\sum_{\substack{m \ge 1,\\ (-1)^{k-\frac{1}{2}}m \equiv 0,1 \mod(4)}} \right] \widehat{f}(m) e\left(\frac{-m}{4z+2} \right) \\
&= (-i(2z+1))^{-k} 2(f|U_4)\left( \frac{-1}{z+\frac{1}{2}}\right) - (f|_k W_4)\left(z+\frac{1}{2}\right) \\
&= 4^{\frac{1}{2}-k}(f| U_4 |_k W_4)\left(\frac{z+\frac{1}{2}}{4} \right) - (f|_k W_4)\left(z+\frac{1}{2}\right) \\
&= \legendre{2}{2k}2^{\frac{1}{2}-k} \left[ f\left( \frac{z+\frac{1}{2}}{4} \right)- (f|U_4)\left(z+\frac{1}{2}\right) \right] \\
&= \legendre{2}{2k} 2^{\frac{1}{2}-k} \sum_{\substack{m \ge 1,\\ (-1)^{k-\frac{1}{2}}m \equiv 1 \mod (4)}} i^{\frac{m}{2}} \widehat{f}(m) e\left(\frac{m}{4}z\right).
\end{aligned}$$
\end{proof}

If we now assume $f \in S^+_k(\Gamma_0(4)^{\star})$ to be a Hecke eigenform, we can even say more about its Fourier coefficients. In this case Waldspurger has shown, that the square of the Fourier coefficients are proportional to the central value of a certain twist of the $L$-function associated to its Shimura lift. We only need a special case, which has been made explicit by Kohnen-Zagier.

\begin{prop} Let $k \in \frac{1}{2}+ \BZ$ with $k \ge \frac{5}{2}$, $f \in S^+_k(\Gamma_0(4)^{\star})$ a Hecke eigenform and let $F \in S_{2k-1}(\SL2(\BZ))$ be the corresponding arithmetically normalised Hecke eigenform ($\widehat{F}(1)=1$) of $f$ under the Shimura map. Further let $D$ be a fundamental discriminant with $(-1)^{k-\frac{1}{2}}D>0$ and $L(F,\legendre{D}{\cdot},s)$ the analytic continuation of the Dirichlet L-series $\sum_{n=1}^{\infty}\legendre{D}{n} \frac{\hat{F}(n)}{n^{k-1}}n^{-s}$. Then
$$
\frac{|\hat{f}(|D|)|^2}{\langle f,f \rangle_{\Gamma_0(4)}} = \frac{\Gamma(k-\frac{1}{2})}{\pi^{k-\frac{1}{2}}} |D|^{k-1} \frac{L(F,\legendre{D}{\cdot},\frac{1}{2})}{\langle F,F \rangle_{\SL2(\BZ)}},
$$
where
$$
\langle F,F \rangle_{\SL2(\BZ)}=\int_{\BF_{\SL2(\BZ)}} |F(z)|^2y^{2k-1}\frac{dxdy}{y^2}
$$
and $\BF_{\SL2(\BZ)}$ is a fundamental domain of $\SL2(\BZ)$.
\label{prop:coeffscritvalue}
\end{prop}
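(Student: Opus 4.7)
The plan is to follow the Kohnen-Zagier approach via an explicit theta kernel realising the Shimura correspondence. One constructs a function $\Omega_{k,D}(\tau, z)$ on $\BH \times \BH$ which is a cusp form of weight $k$ in $\tau$ for $\Gamma_0(4)^{\star}$ (lying in the Kohnen plus space) and a cusp form of weight $2k-1$ in $z$ for $\SL2(\BZ)$. A classical choice, going back to Shintani and Niwa, is to sum over binary quadratic forms of discriminant $Dm^2$,
$$
\Omega_{k,D}(\tau, z) = c_k \sum_{m \ge 1} e(|D|m^2 \tau) \sum_{\substack{Q = [a,b,c] \\ \disc Q = Dm^2}} Q(z, 1)^{-(2k-1)},
$$
up to appropriate sign and normalisation conventions; modularity in both variables and membership in the plus space in $\tau$ are parts of Kohnen's theory.

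One then computes $\langle F, \Omega_{k,D}(\tau, \cdot)\rangle_{\SL2(\BZ)}$ in two ways. First, unfolding the inner $Q$-sum against $F$ by the double coset decomposition produces a period integral whose Mellin transform yields a factor $\Gamma(k-\tfrac{1}{2})/\pi^{k-\tfrac{1}{2}}$ multiplying the Dirichlet series built from $\hat{F}(m)\chi_D(m)m^{-(k-1+s)}$ evaluated at $s=\tfrac{1}{2}$; summing in $m$ reconstructs $|D|^{k-1}L(F, \chi_D, 1/2)\, f(\tau)/\langle f, f\rangle$, where the final factor $f(\tau)/\langle f, f\rangle$ comes from the spectral expansion of $\Omega_{k,D}$ in $\tau$: its $F$-isotypic component in $z$ must be the Shimura preimage of $F$, hence proportional to $f$. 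Second, one reads off the $|D|$-th Fourier coefficient of $\Omega_{k, D}$ in $\tau$ directly from the definition, obtaining a scalar multiple of $F(z)$; taking the Petersson product with $F$ gives that scalar times $\langle F, F\rangle$. Matching these two expressions (whose left-hand sides differ only in the formulation) and using that $\hat{f}(|D|)$ is real for Hecke eigenforms in the plus space yields
$$
\hat{f}(|D|)\, \frac{L(F, \chi_D, 1/2)}{\langle f, f\rangle}\, |D|^{k-1}\, \frac{\Gamma(k-\tfrac{1}{2})}{\pi^{k-\tfrac{1}{2}}} = \frac{|\hat{f}(|D|)|^2}{\langle F, F\rangle}\, \cdot\,\hat{f}(|D|)\cdot(\text{matching constants}),
$$
from which the stated formula follows after the matching constants cancel.

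The principal obstacle is the bookkeeping of normalisation constants: the factor $\tfrac{1}{6}$ in the Petersson product on $\Gamma_0(4)$, the precise normalisation of the quadratic-form kernel (and of the Kohnen plus-space projector), and the conversion between the Dirichlet series convention $L(F, \chi_D, s) = \sum \chi_D(n)\hat{F}(n) n^{-(k-1+s)}$ used in the statement and the standard analytic normalisation. In practice, rather than redoing these computations one cites the main theorem of the original Kohnen-Zagier paper (stated there in weight $k_{\mathrm{KZ}} + \tfrac{1}{2}$) and performs the notational translation $k_{\mathrm{here}} = k_{\mathrm{KZ}} + \tfrac{1}{2}$ together with the corresponding shift of the central point.
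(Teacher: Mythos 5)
Your proposal is correct and takes essentially the same route as the paper, which simply refers to Kohnen--Zagier \cite{KZ81}; your concluding step --- citing their main theorem and translating $k_{\mathrm{here}}=k_{\mathrm{KZ}}+\tfrac{1}{2}$ with the corresponding shift of the central point --- is exactly what is needed, and the dictionary you give ($\Gamma(k-\tfrac12)$, $\pi^{k-\frac12}$, $|D|^{k-1}$, central value at $s=\tfrac12$ in the analytic normalisation) checks out. The only quibble is in your heuristic sketch of the kernel: to produce weight $2k-1$ in $z$ the summand should be $Q(z,1)^{-(k-\frac12)}$ rather than $Q(z,1)^{-(2k-1)}$, but this does not affect the citation-based argument.
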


\begin{proof} We refer to \cite{KZ81}.\end{proof}

Concerning the size of $\langle F,F \rangle_{\SL2(\BZ)}$ we have the following two propositions.

\begin{prop} \label{prop:norm} Let $F \in S_{2k-1}(\SL2(\BZ))$ be an arithmetically normalised Hecke eigenform, then we have:
$$
\langle F,F \rangle_{\SL2(\BZ)} = \frac{\Gamma(2k-1)}{2^{4k-3}\pi^{2k}} L( {\sym}^2 F,1),
$$
where $L(\sym^2 F,s)$ is the analytic continuation of
$$\begin{aligned}
\prod_p \left( 1- \alpha_p^2 p^{2-2k-s} \right)^{-1} &\left( 1-\alpha_p \overline{\alpha_p} p^{2-2k-s}  \right)^{-1} \left( 1- \overline{\alpha_p}^2 p^{2-2k-s} \right)^{-1}\\
=&\frac{\zeta(2s)}{\zeta(s)} \sum_{n=1}^{\infty} \frac{\widehat{F}(n)^2}{n^{s+2k-2}}
\end{aligned}$$
and $\alpha_p,\overline{\alpha_p}$ are the solutions to $\alpha_p+\overline{\alpha_p}=\widehat{F}(p), \, \alpha_p\overline{\alpha_p}=p^{2k-2}$.
\end{prop}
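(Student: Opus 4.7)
The plan is to apply the Rankin--Selberg unfolding method. Introduce the real-analytic Eisenstein series
$$
E(z,s) = \sum_{\gamma \in \Gamma_\infty \backslash \SL2(\BZ)} (\Im \gamma z)^s,
$$
which converges absolutely for $\Re s > 1$, admits meromorphic continuation, and has a simple pole at $s = 1$ with residue $3/\pi$. The key object is
$$
I(s) = \int_{\BF_{\SL2(\BZ)}} |F(z)|^2 y^{2k-1} E(z,s) \, \frac{dxdy}{y^2}.
$$
Since $|F(z)|^2 y^{2k-1}$ is $\SL2(\BZ)$-invariant ($F$ has weight $2k-1$), the standard unfolding replaces $\BF_{\SL2(\BZ)}$ by the strip $[0,1]\times(0,\infty)$. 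Plugging in the Fourier expansion of $F$ at $\infty$ and integrating $x$ over $[0,1]$ kills all off-diagonal terms, and a routine Gamma-function computation gives
$$
I(s) = \frac{\Gamma(s+2k-2)}{(4\pi)^{s+2k-2}} \sum_{n \ge 1} \frac{\widehat{F}(n)^2}{n^{s+2k-2}},
$$
where I use that $\widehat{F}(n) \in \BR$ for a Hecke eigenform on $\SL2(\BZ)$ so that $|\widehat{F}(n)|^2 = \widehat{F}(n)^2$.

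Next, I factor the Dirichlet series using Hecke multiplicativity, namely $\widehat{F}(p^j) = (\alpha_p^{j+1} - \overline{\alpha_p}^{\,j+1})/(\alpha_p - \overline{\alpha_p})$ together with $\alpha_p \overline{\alpha_p} = p^{2k-2}$. A prime-by-prime manipulation of Euler factors yields
$$
\sum_{n \ge 1} \frac{\widehat{F}(n)^2}{n^{s+2k-2}} = \frac{\zeta(s)}{\zeta(2s)} L(\sym^2 F, s),
$$
which is exactly the relation characterising $L(\sym^2 F, s)$ in the statement (obtained by rearranging the displayed identity defining $L(\sym^2 F, s)$).

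Finally, I extract residues at $s = 1$. The geometric side yields $\mathrm{Res}_{s=1} I(s) = (3/\pi)\langle F,F\rangle_{\SL2(\BZ)}$. On the arithmetic side, $L(\sym^2 F, s)$ is entire, so the pole originates solely from $\zeta(s)$ with residue $1$; evaluating the remaining factors at $s=1$ gives $\Gamma(2k-1)/(4\pi)^{2k-1}$ and $1/\zeta(2) = 6/\pi^2$. Equating the two residues and simplifying $(4\pi)^{2k-1} = 2^{4k-2}\pi^{2k-1}$ produces the claimed identity. The only genuine subtleties are the meromorphic continuation of $E(z,s)$ with the correct residue and the justification of unfolding for $\Re s$ large enough; both are classical (Rankin, Selberg). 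The entirety of $L(\sym^2 F, s)$ is due to Shimura.
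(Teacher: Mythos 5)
Your proof is correct, and the constants check out: equating $\frac{3}{\pi}\langle F,F\rangle$ with $\frac{\Gamma(2k-1)}{(4\pi)^{2k-1}}\cdot\frac{6}{\pi^2}L(\sym^2 F,1)$ gives exactly $\frac{\Gamma(2k-1)}{2^{4k-3}\pi^{2k}}L(\sym^2 F,1)$. The paper proves this only by citing Rankin, whose argument is precisely the unfolding you carry out, so your approach is essentially the same as the source's.
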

\begin{proof} See \cite{ContrRamfunc}.
\end{proof}

\begin{prop} \label{prop:sym} Let $F \in S_{2k-1}(\SL2(\BZ))$ be an arithmetically normalised Hecke eigenform, then we have:
$$
k^{-\epsilon} \ll_{\epsilon} L({\sym}^2 F,1) \ll_{\epsilon} k^{\epsilon}.
$$
\end{prop}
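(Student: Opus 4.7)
The plan is to establish the upper bound by standard convexity arguments, and the lower bound by quoting the theorem of Hoffstein--Lockhart on the absence of Siegel-type zeros for symmetric square $L$-functions.

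For the upper bound, I would start from the Rankin--Selberg-type identity
$$
\sum_{n=1}^{\infty} \frac{\widehat{F}(n)^2}{n^{s+2k-2}} = \frac{\zeta(s)}{\zeta(2s)} L(\sym^2 F, s),
$$
which is a rearrangement of the Euler product displayed in the previous proposition. By Deligne's bound $|\widehat{F}(n)| \le d(n)\, n^{k-1}$, the Dirichlet series on the left converges absolutely for $\Re(s) > 1$ with a bound depending only on $\Re(s)$; in particular $|L(\sym^2 F, s)| \ll_{\epsilon} 1$ on the line $\Re(s) = 1+\epsilon$. Shimura established that $L(\sym^2 F, s)$ is entire and satisfies a functional equation $s \leftrightarrow 1-s$ with analytic conductor of size $k^{2}$, so the Phragm\'en--Lindel\"of convexity principle applied to the strip $-\epsilon \le \Re(s) \le 1+\epsilon$ delivers $L(\sym^2 F, 1) \ll_{\epsilon} k^{\epsilon}$.

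The lower bound is the substantive part and cannot be obtained by elementary means: $s=1$ sits on the edge of the region of absolute convergence, and a priori a Siegel-type real zero close to $s=1$ could drag $L(\sym^2 F, 1)$ down by an arbitrary negative power of $k$. This is precisely what is ruled out by the theorem of Hoffstein--Lockhart, together with the effective zero-free region established in the appendix by Goldfeld--Hoffstein--Lieman, which yields even the stronger bound $L(\sym^2 F, 1) \gg (\log k)^{-1}$ for every arithmetically normalised holomorphic Hecke eigenform $F$. The main obstacle -- and the entire technical depth of the proposition -- is concentrated here; the key ingredient in their argument is the positivity of the Dirichlet coefficients of a suitable Rankin--Selberg convolution of $L(\sym^2 F, s)$ with itself, together with the fact that this convolution inherits a simple pole at $s=1$.
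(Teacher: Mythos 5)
Your argument is correct and is exactly the standard proof of the estimate that the paper simply outsources to a citation (p.~41, eq.~2.16 of \cite{ANTALfunc}): the upper bound via absolute convergence of the Rankin--Selberg Dirichlet series at $\Re(s)=1+\epsilon$ together with Phragm\'en--Lindel\"of for an entire $L$-function of analytic conductor $\asymp k^{2}$, and the lower bound via Hoffstein--Lockhart with the Goldfeld--Hoffstein--Lieman appendix, which indeed gives the stronger $L(\sym^2 F,1)\gg (\log k)^{-1}$. You have correctly identified that the lower bound is the only deep ingredient; no further detail is needed.
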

\begin{proof} See page 41 equation 2.16 of \cite{ANTALfunc}.
\end{proof}

If we adopt the notation of Proposition \ref{prop:coeffscritvalue} all the remaining Fourier coefficients of our Hecke eigenform $f\in S^+_k(\Gamma_0(4)^{\star})$ satisfy the following equation
\begin{equation}
\widehat{f}(n^2|D|)=\widehat{f}(|D|) \sum_{d|n}\mu(d) \legendre{D}{d}d^{k-\frac{3}{2}}\widehat{F}\left( \frac{n}{d} \right).
\label{eq:sqrcoeff}
\end{equation}

\section{Proof of Theorems}

Let $f=f_1\in S^+_k(\Gamma_0(4)^{\star})$ be a Hecke eigenform of norm $\langle f,f \rangle_{\Gamma_0(4)}=1$. Then $y^{\frac{k}{2}}|f(z)|$ is $\Gamma_0(4)$ invariant. Moreover we have that $(\im \tau z)^{\frac{k}{2}}|f(\tau z)|=y^{\frac{k}{2}}|(f|_k \tau)(z)|$ holds for all $\tau \in \GL2^+(\BQ)$. This and the fact that the set
$$
\left\{z|\im z \ge  \frac{\sqrt{3}}{8}\right\} \cup \left\{W_4 z|\im z \ge  \frac{\sqrt{3}}{8}\right\} \cup \left\{V_4 z|\im z \ge  \frac{\sqrt{3}}{8}\right\} 
$$
covers a fundamental domain of $\Gamma_0(4)$ imply the following equality
\begin{equation}
\sup_{z \in \BH} y^{\frac{k}{2}} |f(z)|= \max_{\xi \in \{I,W_4,V_4\}} \sup_{y \ge \frac{\sqrt{3}}{8}} y^{\frac{k}{2}} |(f|_k \xi)(z)|.
\label{eq:reducing}
\end{equation}

The proof of Theorem \ref{thm:2} and \ref{thm:1} is split up into two parts. In the first part we use the Fourier expansion and bounds on the Fourier coefficients to bound the supnorm near a cusp. If we are far away from the cusp we can use the Bergman kernel in combination with an amplifier to get superior results, which is described in the second part. In a third part we give the proof of Theorem \ref{thm:3}.

\subsection{Bounding the Fourier expansion}On a first thought it is tempting to use classical estimates such as
$$
\sum_{n\le N} |\widehat{f}(n)|^2 \ll_f N^k
$$
to bound the Fourier expansion, but it turns out that the implied constant is heavily dependent on $f$, in fact the supnorm of $f$ itself appears as a factor. Thus one might try and use deeper techniques or one can use the currently best known result towards the Ramanujan-Petersson conjecture. We follow the latter path.\\

Throughout we assume we have a uniform bound of the shape
\begin{equation}
L\left(F,\chi,\frac{1}{2}\right) \ll k^{\alpha} q^{\beta}
\label{eq:uniformbound}
\end{equation}
for all arithmetically normalised Hecke eigenforms $F \in S_{2k-1}(\SL2(\BZ))$ and quadratic characters $\chi$ of conductor $q$. Through the work of Petrow \cite{Pet14} and Young \cite{Young14} we now know that the pair $(\alpha,\beta)=(\frac{1}{3}+\epsilon,\frac{1}{3}+\epsilon)$ is permissible for all $\epsilon>0$. The Lindel\"of hypothesis corresponds of course to the pair $(\alpha,\beta)=(\epsilon,\epsilon)$.\\

Using Deligne's bound for the Fourier coefficients of $F\in S_{2k-1}(\SL2(\BZ))$ in equation \eqref{eq:sqrcoeff} we find that:

\begin{equation}
|\widehat{f}(n^2|D|)| \ll_{\epsilon} |\widehat{f}(|D|)| \cdot \sum_{d|n} d^{k-\frac{3}{2}} \left( \frac{n}{d} \right)^{k-1+\epsilon} \ll_{\epsilon} |\widehat{f}(|D|)| \cdot (n^2)^{\frac{k-1}{2}+\epsilon}.
\label{eq:sqrbound}
\end{equation}

Combining the Propositions \ref{prop:coeffscritvalue}, \ref{prop:norm} and \ref{prop:sym} with the bound \eqref{eq:uniformbound} we get:

\begin{equation}
|\widehat{f}(|D|)| \ll_{\epsilon} \frac{(4\pi)^{\frac{k}{2}}}{\Gamma(k)^{\frac{1}{2}}} \cdot |D|^{\frac{k-1+\beta}{2}} k^{\frac{\alpha}{2}+\epsilon}.
\label{eq:discbound}
\end{equation}

Thus we conclude the following proposition.

\begin{prop} Let $k \in \frac{1}{2}+\BZ$ with $k\ge \frac{5}{2}$ and $f \in S_k^+(\Gamma_0(4)^{\star})$ a $L^2$-normalised Hecke eigenform. Further assume we have a uniform bound as in \eqref{eq:uniformbound} with $\beta>0$, then we have the following estimate on its Fourier coefficients:
$$
|\widehat{f}(m)| \ll_{\epsilon} \frac{(4\pi)^{\frac{k}{2}}k^{\frac{\alpha}{2}+\epsilon}}{\Gamma(k)^{\frac{1}{2}}} \cdot m^{\frac{k-1+\beta}{2}}.
$$
\label{prop:fouriercoefbound}
\end{prop}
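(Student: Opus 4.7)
The plan is to simply combine the two estimates \eqref{eq:sqrbound} and \eqref{eq:discbound}, which have already been assembled in the discussion preceding the statement. First, since $f$ lies in the Kohnen plus space, $\widehat{f}(m)$ vanishes unless $(-1)^{k-\frac{1}{2}}m \equiv 0, 1 \pmod 4$, so it suffices to treat such $m$.

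For such $m$, I would decompose $(-1)^{k-\frac{1}{2}} m = n^{2} D$ uniquely with $n \in \BN$ and $D$ a fundamental discriminant. This gives $m = n^{2}|D|$ with $(-1)^{k-\frac{1}{2}}D > 0$, so Proposition \ref{prop:coeffscritvalue} applies to $\widehat{f}(|D|)$; invoking Propositions \ref{prop:norm} and \ref{prop:sym} together with the assumed bound \eqref{eq:uniformbound} then yields \eqref{eq:discbound} for $|\widehat{f}(|D|)|$. The Shimura-type relation \eqref{eq:sqrcoeff} together with Deligne's bound yields \eqref{eq:sqrbound}, controlling $\widehat{f}(n^{2}|D|)$ by $\widehat{f}(|D|) \cdot n^{k-1+2\epsilon}$.

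Substituting one estimate into the other, I obtain
$$
|\widehat{f}(m)| \ll_{\epsilon} \frac{(4\pi)^{k/2}\, k^{\alpha/2 + \epsilon}}{\Gamma(k)^{1/2}} \cdot |D|^{(k-1+\beta)/2} \cdot n^{k-1+2\epsilon}.
$$
At this point the hypothesis $\beta > 0$ becomes essential: by choosing the $\epsilon$ governing the application of \eqref{eq:sqrbound} to be smaller than $\beta/2$ (independently of the $\epsilon$ retained in the final factor $k^{\alpha/2+\epsilon}$), the factor $n^{k-1+2\epsilon}$ is dominated by $n^{k-1+\beta}$. Using $m^{(k-1+\beta)/2} = n^{k-1+\beta}\,|D|^{(k-1+\beta)/2}$, this immediately collapses to the claimed bound.

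I do not expect any serious obstacle here: the proposition is a routine amalgamation of two inequalities already in hand, and the only genuine delicacy is the bookkeeping of the two distinct roles played by $\epsilon$, which is resolved precisely by the strict positivity of $\beta$.
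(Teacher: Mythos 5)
Your proposal is correct and is exactly the argument the paper intends: the paper gives no separate proof beyond deriving \eqref{eq:sqrbound} and \eqref{eq:discbound} and saying ``thus we conclude,'' and your decomposition $(-1)^{k-\frac{1}{2}}m=n^{2}D$ with $D$ fundamental, followed by substituting one bound into the other, is precisely that amalgamation. Your explicit remark that $\beta>0$ is what absorbs the $n^{2\epsilon}$ from Deligne's bound is a welcome piece of bookkeeping that the paper leaves implicit.
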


For convenience let us introduce the sum
\begin{equation}
S(\alpha,\beta,\kappa)=\sum_{m+\kappa>0} (m+\kappa)^{\alpha}e^{-\beta(m+\kappa)}, \quad \alpha,\beta,\kappa>0.
\label{eq:Sab}
\end{equation}

We will further need two lemmata for this sum.

\begin{lem} \label{lem:Sabest} $S(\alpha,\beta,\kappa)$ as defined by \eqref{eq:Sab} satisfies the following inequalities:
$$
S(\alpha,\beta,\kappa) \le \beta^{-\alpha-1}\Gamma(\alpha+1)+\beta^{-\alpha} \alpha^{\alpha} e^{-\alpha}
$$
and for $\alpha \le \beta \kappa$ we have:
$$
S(\alpha,\beta,\kappa) \le \beta^{-\alpha-1}\Gamma(\alpha+1) + \kappa^{\alpha} e^{-\beta\kappa}.
$$
\end{lem}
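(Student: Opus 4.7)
The plan is to use integral comparison, exploiting the unimodality of $g(x) = x^{\alpha} e^{-\beta x}$. This function has a unique maximum at $x_0 = \alpha/\beta$ with value $g(x_0) = \beta^{-\alpha} \alpha^{\alpha} e^{-\alpha}$, it is increasing on $[0, x_0]$ and decreasing on $[x_0, \infty)$, and $\int_0^\infty g(x)\, dx = \beta^{-\alpha-1}\Gamma(\alpha+1)$.

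For the first inequality, let $m^*$ be the largest integer with $m^* + \kappa \le x_0$. For $m \le m^* - 1$ the interval $[m+\kappa, m+\kappa+1]$ lies in the increasing region, so $g(m+\kappa) \le \int_{m+\kappa}^{m+\kappa+1} g$; for $m \ge m^* + 2$ the interval $[m+\kappa-1, m+\kappa]$ lies in the decreasing region, so $g(m+\kappa) \le \int_{m+\kappa-1}^{m+\kappa} g$. Telescoping these contributes $\int_0^{m^*+\kappa} g + \int_{m^*+1+\kappa}^\infty g$, leaving the two peak terms $g(m^*+\kappa)$ and $g(m^*+1+\kappa)$. The key trick is to treat these jointly: without loss of generality assume $g(m^*+\kappa) \le g(m^*+1+\kappa)$, so that $g(m^*+1+\kappa) \le g(x_0)$ trivially. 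Because $x_0 \in [m^*+\kappa, m^*+1+\kappa]$, the function $g$ attains its minimum on this interval at one of its endpoints, namely (by the WLOG assumption) at $m^*+\kappa$; hence $g \ge g(m^*+\kappa)$ on the unit interval, and integration gives $g(m^*+\kappa) \le \int_{m^*+\kappa}^{m^*+1+\kappa} g$. Assembling the four pieces yields $S \le \int_0^\infty g + g(x_0)$, which is the desired bound.

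For the second inequality, the hypothesis $\alpha \le \beta\kappa$ translates to $x_0 \le \kappa$, so every $m+\kappa$ in the sum lies in the decreasing region of $g$. Since $\kappa \in (0,1)$ in the intended application, the sum starts at $m = 0$. I isolate the first term $g(\kappa) = \kappa^\alpha e^{-\beta\kappa}$, and for $m \ge 1$ use the clean comparison $g(m+\kappa) \le \int_{m+\kappa-1}^{m+\kappa} g$, valid throughout the decreasing range. Telescoping gives $\sum_{m \ge 1} g(m+\kappa) \le \int_\kappa^\infty g \le \beta^{-\alpha-1}\Gamma(\alpha+1)$, and the result follows.

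The only non-routine point is the peak argument in the first inequality: a naive two-sided integral comparison would yield a factor of $2$ in front of $g(x_0)$, and securing the clean coefficient $1$ is precisely what requires the joint treatment of $g(m^*+\kappa)$ and $g(m^*+1+\kappa)$ — absorbing the smaller of the two into the integral over $[m^*+\kappa, m^*+1+\kappa]$, and estimating the larger by the maximum.
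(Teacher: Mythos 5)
Your argument is correct, and it is the standard integral-comparison proof one would expect; note that the paper itself does not prove this lemma but simply cites Lemma~1 of \cite{realweight}, so there is no internal proof to compare against. The one genuinely delicate point --- obtaining the coefficient $1$ rather than $2$ on the peak term $\beta^{-\alpha}\alpha^{\alpha}e^{-\alpha}=g(\alpha/\beta)$ --- is handled correctly by your joint treatment of the two lattice points straddling $x_0=\alpha/\beta$: the smaller of $g(m^*+\kappa)$, $g(m^*+1+\kappa)$ is dominated by the integral of $g$ over the unit interval between them (since the minimum of a unimodal function on an interval containing its mode is attained at an endpoint), and the larger is at most $g(x_0)$. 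Two small remarks: (i) you should dispose of the degenerate case where $x_0$ lies below the smallest summand $m+\kappa$, in which case there are no ``increasing'' terms and the single boundary term is still bounded by $g(x_0)$, so the first inequality survives; (ii) your second inequality as argued uses that the smallest summand equals $\kappa$, which requires $0<\kappa\le 1$ --- this covers the paper's only use of the lemma (where $\kappa=1$), but strictly speaking the lemma is stated for all $\kappa>0$, so it is worth flagging that restriction (or observing that for $\kappa>1$ the initial terms below $\kappa$ must be absorbed separately).
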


\begin{proof} This is Lemma 1 of \cite{realweight}.
\end{proof} 

\begin{lem} \label{lem:expdecay} The following inequality holds for $\kappa \ge 6 \frac{\alpha}{\beta}, \ \alpha,\beta>0$:
$$
\kappa^{\alpha}e^{-\beta \kappa} \le \alpha^{\alpha}\beta^{-\alpha}e^{-\alpha} \cdot e^{-\frac{\beta \kappa}{2}}.
$$
\end{lem}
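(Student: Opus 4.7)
The plan is to reduce the inequality to a one-variable calculus problem and then use monotonicity. Taking logarithms (or dividing through) the claim is equivalent to
$$
\kappa^{\alpha} e^{-\beta \kappa / 2} \le \alpha^{\alpha} \beta^{-\alpha} e^{-\alpha},
$$
and after multiplying by $\beta^{\alpha}$ and substituting $t = \beta \kappa$ this becomes
$$
t^{\alpha} e^{-t/2} \le \alpha^{\alpha} e^{-\alpha} \quad \text{for all } t \ge 6 \alpha.
$$

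I would then study $g(t) = t^{\alpha} e^{-t/2}$ on $(0, \infty)$. A direct computation gives $g'(t) = t^{\alpha - 1} e^{-t/2}(\alpha - t/2)$, so $g$ is strictly increasing on $(0, 2\alpha]$ and strictly decreasing on $[2\alpha, \infty)$. In particular, for $t \ge 6\alpha > 2\alpha$ the function $g$ is decreasing, so it suffices to check the inequality at the endpoint $t = 6\alpha$.

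At $t = 6\alpha$ one has $g(6\alpha) = 6^{\alpha} \alpha^{\alpha} e^{-3\alpha}$, and the inequality to verify becomes $6^{\alpha} e^{-3\alpha} \le e^{-\alpha}$, equivalently $6 \le e^{2}$. This holds since $e^{2} \approx 7.389 > 6$, completing the argument. There is no real obstacle here; the only thing to be careful of is fixing the constant $6$ in the hypothesis — any constant $c$ with $c \le e^{2}$ and $c \ge 2$ would work, so $6$ is a convenient round choice that leaves a small margin.
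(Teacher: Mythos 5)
Your proof is correct. The paper itself gives no argument here -- it simply cites Lemma 2 of \cite{realweight} -- and your reduction to $g(t)=t^{\alpha}e^{-t/2}$ with $t=\beta\kappa$, followed by the observation that $g$ decreases on $[2\alpha,\infty)$ and the endpoint check $6\le e^{2}$, is exactly the standard argument one would expect to find in that reference. One small quibble with your closing aside: the condition for a general constant $c$ to work at the endpoint is $c\,e^{-c/2}\le e^{-1}$, i.e.\ $c\le e^{c/2-1}$ (satisfied for $c\gtrsim 5.36$), not ``$2\le c\le e^{2}$''; for instance $c=3$ fails. This does not affect the lemma as stated, since $c=6$ comfortably satisfies the correct condition.
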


\begin{proof} This is Lemma 2 of \cite{realweight}.
\end{proof}

Using Proposition \ref{prop:fouriercoefbound} in Lemma \ref{lem:Fourrierpluscusp} we find:

\begin{equation}\begin{aligned}
y^{\frac{k}{2}}|f(z)| & \ll_{\epsilon} \frac{y^{\frac{k}{2}}(4 \pi)^{\frac{k}{2}}k^{\frac{\alpha}{2}+\epsilon}}{\Gamma(k)^{\frac{1}{2}}} S \left(\frac{k-1+\beta}{2},2 \pi y,1 \right) , \\
y^{\frac{k}{2}}|(f|_k W_4)(z)| & \ll_{\epsilon} \frac{y^{\frac{k}{2}}(4 \pi)^{\frac{k}{2}}k^{\frac{\alpha}{2}+\epsilon}}{\Gamma(k)^{\frac{1}{2}}} S \left(\frac{k-1+\beta}{2},2 \pi y,1 \right), \\
y^{\frac{k}{2}}|(f|_k V_4)(z)| & \ll_{\epsilon} \frac{\left(\frac{y}{4}\right)^{\frac{k}{2}}(4 \pi)^{\frac{k}{2}}k^{\frac{\alpha}{2}+\epsilon}}{\Gamma(k)^{\frac{1}{2}}} S \left(\frac{k-1+\beta}{2},\frac{2 \pi y}{4}, 1 \right).
\label{eq:halfintysmall}
\end{aligned}\end{equation}

Using Lemma \ref{lem:Sabest} we find:
$$\begin{aligned}
S \left(\frac{k-1+\beta}{2},2 \pi y,1 \right) & \ll (4 \pi)^{-\frac{k}{2}+\frac{1}{2}-\frac{\beta}{2}} y^{-\frac{k}{2}-\frac{1}{2}-\frac{\beta}{2}} k^{\frac{k}{2}+\frac{\beta}{2}}e^{-\frac{k}{2}}  \left(1+yk^{-\frac{1}{2}} \right).
\end{aligned}$$

Thus we get the following proposition.
\begin{prop} Let $k \in \frac{1}{2}+\BZ$ with $k \ge \frac{5}{2}$, $f \in S^+_k(\Gamma_0(4)^{\star})$ a $L^2$-normalised Hecke eigenform. Assuming \eqref{eq:uniformbound} holds with $\beta>0$, then we have for $y \ge \frac{\sqrt{3}}{8}$:
$$\begin{aligned}
y^{\frac{k}{2}}|f(z)| & \ll_{\epsilon} \frac{k^{\frac{1}{4}+\frac{\alpha}{2}+\frac{\beta}{2}+\epsilon}}{y^{\frac{1}{2}+\frac{\beta}{2}}} \left( 1+yk^{-\frac{1}{2}}\right), \\
y^{\frac{k}{2}}|(f|_k W_4)(z)| & \ll_{\epsilon} \frac{k^{\frac{1}{4}+\frac{\alpha}{2}+\frac{\beta}{2}+\epsilon}}{y^{\frac{1}{2}+\frac{\beta}{2}}} \left( 1+yk^{-\frac{1}{2}}\right), \\
y^{\frac{k}{2}}|(f|_k V_4)(z)| & \ll_{\epsilon} \frac{k^{\frac{1}{4}+\frac{\alpha}{2}+\frac{\beta}{2}+\epsilon}}{y^{\frac{1}{2}+\frac{\beta}{2}}} \left( 1+yk^{-\frac{1}{2}}\right).
\end{aligned}$$
\label{prop:halfsmallab}
\end{prop}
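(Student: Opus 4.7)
The plan is to derive the stated bound by inserting Lemma \ref{lem:Sabest} into the three estimates \eqref{eq:halfintysmall} and then applying Stirling's formula twice; the entire argument is a bookkeeping exercise with no real conceptual obstacle.

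Concretely, I would first apply the first (unconditional) inequality of Lemma \ref{lem:Sabest} with $\alpha_0 := (k-1+\beta)/2$, $\kappa = 1$, and second argument $b$ equal to $2\pi y$ (for $f$ and $W_4$) or $\pi y/2$ (for $V_4$), obtaining
\begin{equation*}
S(\alpha_0, b, 1) \le b^{-\alpha_0 - 1}\Gamma(\alpha_0 + 1) + b^{-\alpha_0}\alpha_0^{\alpha_0}e^{-\alpha_0}.
\end{equation*}
Invoking Stirling's formula $\Gamma(\alpha_0 + 1) \sim \alpha_0^{\alpha_0}e^{-\alpha_0}\sqrt{2\pi\alpha_0}$ puts both summands into a common form and yields
\begin{equation*}
S(\alpha_0, b, 1) \ll b^{-\alpha_0 - 1}\alpha_0^{\alpha_0}e^{-\alpha_0}\bigl(\sqrt{\alpha_0} + b\bigr) \ll b^{-\alpha_0 - 1}\alpha_0^{\alpha_0}e^{-\alpha_0}\sqrt{k}\bigl(1 + yk^{-1/2}\bigr),
\end{equation*}
where the second inequality uses $\alpha_0 \asymp k$ together with $y \ge \sqrt{3}/8$. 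This is the origin of the factor $(1+yk^{-1/2})$ appearing in the proposition.

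Next I would apply Stirling a second time, in the form $\Gamma(k)^{1/2} \sim k^{(k-1/2)/2}e^{-k/2}$, to the denominator of \eqref{eq:halfintysmall}, and substitute the bound above. It is then a matter of checking that the exponentials $e^{\pm k/2}$ cancel, that the dominant powers $y^{k/2}$ and $k^{k/2}$ cancel, and that all of the constant-to-the-$k$ factors (linear-in-$k$ powers of $2$ and $\pi$) cancel as well. Once this is done, what remains is precisely the claimed bound. The $W_4$ case is handled identically, since the extra factor $2^{1/2-k}$ from Lemma \ref{lem:Fourrierpluscusp} is absorbed by the factor $(4m)^{(k-1+\beta)/2} = 2^{k-1+\beta}m^{(k-1+\beta)/2}$ coming from applying Proposition \ref{prop:fouriercoefbound} to $\widehat{f}(4m)$. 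For $V_4$ one takes $b = \pi y/2$ and replaces the prefactor $y^{k/2}$ by $(y/4)^{k/2}$, after which the additional powers of $2$ and $4$ again cancel. The main (and really only) obstacle is not mathematical but clerical: keeping track of the many exponents of $y$, $k$, $e$, $2$, and $\pi$ simultaneously through the computation.
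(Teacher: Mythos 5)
Your proposal is correct and follows essentially the same route as the paper: insert Proposition \ref{prop:fouriercoefbound} into the Fourier expansions at the three cusps (which is exactly \eqref{eq:halfintysmall}), bound $S(\frac{k-1+\beta}{2}, b, 1)$ via the first inequality of Lemma \ref{lem:Sabest} together with Stirling, and cancel against $\Gamma(k)^{1/2}$; your identification of $\sqrt{\alpha_0}+b$ as the source of the factor $(1+yk^{-1/2})$ and your accounting of the powers of $2$ at the cusps $0$ and $\frac12$ are exactly what the paper does implicitly.
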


If $y \ge \frac{3k}{\pi}$ (and $\beta\le k$) we can use the second part of Lemma \ref{lem:Sabest} with Lemma \ref{lem:expdecay} to get

$$\begin{aligned}
S \left(\frac{k-1+\beta}{2},2 \pi y,1 \right) & \ll (4 \pi y)^{-\frac{k}{2}-\frac{1}{2}-\frac{\beta}{2}} k^{\frac{k}{2}+\frac{\beta}{2}} e^{-\frac{k}{2}} \left(1+k^{\frac{1}{2}} e^{-\pi y} \right).
\end{aligned}$$
Thus we conclude the following proposition.
\begin{prop} Let $k \in \frac{1}{2}+\BZ$ with $k\ge \frac{5}{2}$ and, $f \in S^+(\Gamma_0(4)^{\star})$ a $L^2$-normalised Hecke eigenform. Assuming \eqref{eq:uniformbound} with $k\ge\beta>0$ then we have for $y \ge \frac{12k}{\pi}$:
$$\begin{aligned}
y^{\frac{k}{2}}|f(z)| & \ll_{\epsilon} \frac{k^{\frac{1}{4}+\frac{\alpha}{2}+\frac{\beta}{2}+\epsilon}}{y^{\frac{1}{2}+\frac{\beta}{2}}} \left( 1+k^{\frac{1}{2}}e^{-\pi y}\right), \\
y^{\frac{k}{2}}|(f|_k W_4)(z)| & \ll_{\epsilon} \frac{k^{\frac{1}{4}+\frac{\alpha}{2}+\frac{\beta}{2}+\epsilon}}{y^{\frac{1}{2}+\frac{\beta}{2}}} \left( 1+k^{\frac{1}{2}} e^{- \pi y}\right), \\
y^{\frac{k}{2}}|(f|_k V_4)(z)| & \ll_{\epsilon} \frac{k^{\frac{1}{4}+\frac{\alpha}{2}+\frac{\beta}{2}+\epsilon}}{y^{\frac{1}{2}+\frac{\beta}{2}}} \left( 1+k^{\frac{1}{2}}e^{-\frac{\pi y}{4}}\right).
\end{aligned}$$
\label{prop:halfbigab}
\end{prop}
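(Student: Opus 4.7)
The plan is to substitute a sharper estimate for $S\!\left(\tfrac{k-1+\beta}{2},B,1\right)$ into \eqref{eq:halfintysmall}, where $B=2\pi y$ at the cusps $\infty$ and $0$ (via $W_4$) and $B=\tfrac{\pi y}{2}$ at the cusp $\tfrac{1}{2}$ (via $V_4$, since Lemma \ref{lem:Fourrierpluscusp} produces a Fourier argument of $mz/4$). This simply replaces the cruder first bound of Lemma \ref{lem:Sabest} used in Proposition \ref{prop:halfsmallab} with its sharper second bound, supplemented by Lemma \ref{lem:expdecay} on the tail.

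First I would verify the hypotheses. The second inequality of Lemma \ref{lem:Sabest} demands $\tfrac{k-1+\beta}{2}\le B$, while Lemma \ref{lem:expdecay} demands $B\ge 3(k-1+\beta)$, which is the more restrictive. The stringent case is the $V_4$-cusp, where the latter reduces to $y\ge\tfrac{6(k-1+\beta)}{\pi}$, implied by $y\ge\tfrac{12k}{\pi}$ together with $\beta\le k$. This is precisely why the threshold in the proposition is sharpened from the value $\tfrac{3k}{\pi}$ mentioned in the preceding paragraph, which sufficed only for the cusps $\infty$ and $0$.

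Combining the two lemmata and simplifying $\Gamma\!\left(\tfrac{k+1+\beta}{2}\right)$ by Stirling then yields the packaged bound already displayed in the text preceding the proposition,
$$
S\!\left(\tfrac{k-1+\beta}{2},B,1\right) \;\ll\; (2B)^{-\frac{k+1+\beta}{2}}\, k^{\frac{k+\beta}{2}}\, e^{-\frac{k}{2}}\, \bigl(1+k^{1/2}e^{-B/2}\bigr).
$$
Inserting this into \eqref{eq:halfintysmall} and invoking Stirling once more in the form $\Gamma(k)^{1/2}\asymp (2\pi)^{1/4} k^{-1/4}(k/e)^{k/2}$ causes the factors $y^{k/2}$, $(4\pi)^{k/2}$, $k^{k/2}$, and $e^{-k/2}$ to collapse against one another, leaving precisely $k^{1/4+\alpha/2+\beta/2+\epsilon}\,y^{-1/2-\beta/2}\bigl(1+k^{1/2}e^{-B/2}\bigr)$. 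Translating $B$ back to $2\pi y$ at the first two cusps and to $\tfrac{\pi y}{2}$ at $V_4$ gives the three displayed bounds with $e^{-\pi y}$ and $e^{-\pi y/4}$ respectively.

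The only real obstacle is bookkeeping: tracking the constants depending on $\beta$ produced by Stirling and the different effective values of $B$ at the three cusps. Since $\beta\le k$ by hypothesis, none of the $\beta$-dependent factors grows faster than $k^\epsilon$, so they are harmlessly absorbed into $\ll_\epsilon$. No analytic input beyond Lemmata \ref{lem:Sabest} and \ref{lem:expdecay}, Proposition \ref{prop:fouriercoefbound}, and Stirling's formula is required.
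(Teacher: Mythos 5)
Your argument is correct and follows the paper's own proof exactly: the paper likewise substitutes the second bound of Lemma \ref{lem:Sabest} together with Lemma \ref{lem:expdecay} into \eqref{eq:halfintysmall} and applies Stirling to $\Gamma(k)^{1/2}$, and your verification that the $V_4$-cusp (where the effective decay parameter is $\tfrac{\pi y}{2}$) is what forces the threshold $y\ge \tfrac{12k}{\pi}$, as opposed to the $\tfrac{3k}{\pi}$ sufficient for the cusps $\infty$ and $0$, is precisely the right bookkeeping. No gaps.
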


If we assume the Lindel\"of hypothesis then the conjunction of Propositions \ref{prop:halfsmallab} and \ref{prop:halfbigab} with the observation \eqref{eq:reducing} gives Theorem \ref{thm:2}. If we use the unconditional result $(\alpha,\beta)=(\frac{1}{3}+\epsilon,\frac{1}{3}+\epsilon)$ instead we find that:
\begin{equation}
\max_{\xi \in \{I,W_4,V_4\}} \sup_{y \ge k^{\frac{1}{4}}} y^{\frac{k}{2}} |(f|_k \xi)(z)| \ll_{\epsilon} k^{\frac{1}{4}+\frac{1}{6}+\epsilon}.
\label{eq:nearcusp}
\end{equation}
The remaining region will be dealt with in the next section.

\subsection{Amplification}

We start by using the Bergman kernel as given in Theorem 4 of \cite{realweight} to deduce the identity

\begin{equation}
\sum_j \overline{f_j(w)}f_j(z)= \frac{3 (k-1)}{4 \pi} \sum_{\xi  \in \Gamma_0(4)^{\star}} \frac{1}{\left(\frac{z-\overline{w}}{2i}\right)^k} \Bigg |_k \xi,
\label{eq:plainkernel}
\end{equation}

where $|_k \xi$ is taken with respect to the variable $z$ and $\{f_j\}$ is an orthonormal basis of the whole space $S_k(\Gamma_0(4)^{\star})$. If we apply the Hecke operator $|_k T(m)$ to both sides with respect the variable $z$ we get
\begin{equation}
\sum_j \lambda_j(m)\overline{f_j(w)}f_j(z)= \frac{3 (k-1)}{4 \pi} m^{\frac{k}{2}-1}\sum_{\xi  \in \Gamma_0(4)^{\star} \xi_{1,m} \Gamma_0(4)^{\star}} \frac{1}{\left(\frac{z-\overline{w}}{2i}\right)^k} \Bigg |_k \xi,
\label{eq:heckekernel}
\end{equation}
with
$$
\xi_{1,m}= \left( \begin{pmatrix} 1 & 0 \\ 0 & m \end{pmatrix}, m^{\frac{1}{4}} \right).
$$
Let us denote with $A_j(m)=\lambda_j(m)m^{-\frac{k-1}{2}}$ the normalised Hecke eigenvalues. Further let $\CM$ be a finite set of squares of odd integers and $x_m$ arbitrary real numbers for $m \in \CM$. Using the identity 
$$
\lambda_j(m^2)\lambda_j(n^2)=\sum_{d|(m,n)} d^{k-1}\lambda_j\left( \frac{m^2n^2}{d^4} \right)
$$
we get the following equation
\begin{equation}\begin{aligned}
 &\sum_{j} \left | \sum_{m \in \CM} x_m A_j(m) \right |^2 \overline{f_j(w)}f_j(z) \\ =& \sum_{m_1,m_2 \in \CM} x_{m_1}x_{m_2} (m_1m_2)^{-\frac{k-1}{2}} \sum_j \lambda_j(m_1) \lambda_j(m_2) \overline{f_j(w)}f_j(z) \\
=& \sum_l y_l l^{-\frac{k-1}{2}} \sum_j \lambda_j(l) \overline{f_j(w)}f_j(z) \\
=& \frac{3 (k-1)}{4 \pi} \sum_l y_l l^{-\frac{1}{2}}  \sum_{\xi  \in \Gamma_0(4)^{\star} \xi_{1,l} \Gamma_0(4)^{\star}} \frac{1}{\left(\frac{z-\overline{w}}{2i}\right)^k} \Bigg |_k \xi,
\label{eq:amplified}
\end{aligned}\end{equation}
where
$$
y_l= \sum_{\substack{m_1,m_2 \in \CM, \\ d^2|(m_1,m_2), \\l=\frac{m_1m_2}{d^{4}}}} x_{m_1}x_{m_2}.
$$
Specialising to $w=z$ we get the inequality we are interested in:
\begin{equation}
\left |\sum_{m \in \CM} x_m A_1(m) \right|^2 \cdot y^k |f_1(z)|^2 \le \frac{3(k-1)}{4 \pi} \sum_l |y_l| l^{-\frac{1}{2}} \sum_{\gamma \in G_l(4)} d_{\gamma}(z)^{-k},
\label{eq:domain1}
\end{equation}
where
$$
d_{\gamma}(z)= \frac{|\gamma z-\overline{z}|\cdot|j(\gamma,z)|}{2yl^{\frac{1}{2}}}
$$
and
$$
G_l(4)= \{\gamma \in \GL2(\BZ) | \det \gamma =l \text{ and } \gamma \equiv \begin{pmatrix} \star & \star \\ 0 & \star \end{pmatrix} \mod(4)\}.
$$
Note that
$$
d_{\gamma}(z)^2=u(\gamma z, z)+1,
$$
where
$$
u(z,w)=\frac{|z-w|^2}{4 \im z \im w}.
$$
Now we want the same inequality with $f_1$ replaced with $f_1|_k W_4$ and $f_1|_k V_4$. For this we replace \eqref{eq:plainkernel} with the following
\begin{equation}
\sum_j \overline{(f_j|_k B)(w)}(f_j|_k B)(z)= \frac{3 (k-1)}{4 \pi} \sum_{\xi  \in B^{-1}\Gamma_0(4)^{\star}B} \frac{1}{\left(\frac{z-\overline{w}}{2i}\right)^k} \Bigg |_k \xi,
\label{eq:twistplainkernel}
\end{equation}
where $B \in \{W_4,V_4\}$. Now we apply $|_k B^{-1}T(m) B$ to both sides and proceed as before leading to
\begin{equation}
\left |\sum_{m \in \CM} x_m A_1(m) \right|^2 \cdot y^k |(f_1|_k B)(z)|^2 \le \frac{3(k-1)}{4 \pi} \sum_l |y_l| l^{-\frac{1}{2}} \sum_{\gamma \in B^{-1}G_l(4)B} d_{\gamma}(z)^{-k}.
\label{eq:domain2}
\end{equation}
Now we just have to note that both $W_4$ and $V_4$ stabilze $G_l(4)$ for odd $l$.\\

We now condsider two sets $\CM_1,\CM_2$ given by
$$\begin{aligned}
\CM_1 &= \{p^2 | \Lambda \le p < 2 \Lambda, p \neq 2 \},\\
\CM_2 &= \{p^4 | \Lambda \le p < 2 \Lambda, p \neq 2 \},
\end{aligned}$$
with
$$
x_m=\sign(A_1(m)), \forall m \in \CM_1 \text{ (respectively $\CM_2$)},
$$
for which we have
\begin{equation}\begin{aligned}
|y_l| &\ll \begin{cases} \Lambda, & l=1,\\ 1, & l=p^2q^2 \text{ with } p^2,q^2 \in \CM_1, \\ 0, & \text{otherwise},\end{cases} \\
|y_l| &\ll \begin{cases} \Lambda, & l=1,\\ 1, & l=p^4, p^4q^4 \text{ with } p^4,q^4 \in \CM_2, \\ 0, & \text{otherwise},\end{cases}
\label{eq:ybound}
\end{aligned}\end{equation}
respectively. We add now the two equations \eqref{eq:domain1} for $\CM=\CM_1,\CM_2$ and by Cauchy-Schwarz we see that the left hand side has a lower bound of
$$
\left( \sum_{m \in \CM_1\cup \CM_2} |A_1(m)|  \right)^2 y^k|f_1(z)|^2 \gg_{\epsilon} \Lambda^{2-\epsilon} y^k|f_1(z)|^2
$$
as $\max(|A_1(p^2)|,|A_1(p^4)|)\gg 1$. We get the same inequality also for the other cusps and conclude
\begin{equation}
\Lambda^{2-\epsilon} \max_{B \in \{I,W_4,V_4\}} y^k|(f_1|_kB)(z)|^2 \ll_{\epsilon} \sum_{\CM = \CM_1,\CM_2 } k\sum_l |y_l| l^{-\frac{1}{2}} \sum_{\gamma \in G_l(4)} (u(\gamma z,z)+1)^{-\frac{k}{2}}.
\label{eq:ampli}
\end{equation} Thus we are left to bound the right hand side. For this reason we define the following quantities:
\begin{equation}\begin{aligned}
M(z,l,\delta) &= |\{\gamma \in G_l(4) | u(\gamma z,z) \le \delta\}|, \\ 
M_{\star}(z,l,\delta) &= |\{\gamma \in G_l(4) | c\neq 0, (a+d)^2 \neq 4l \text{ and } u(\gamma z,z) \le \delta\}|, \\
M_{u}(z,l,\delta) &= |\{\gamma \in G_l(4) | c=0, a \neq d \text{ and } u(\gamma z,z) \le \delta\}|, \\
M_{p}(z,l,\delta) &= |\{\gamma \in G_l(4) | (a+d)^2 = 4l \text{ and } u(\gamma z,z) \le \delta\}|.
\label{eq:counting}
\end{aligned}\end{equation}

\begin{lem} For $z=x+iy \in \BH$ with $|x| \ll 1$ and $y \gg 1$ we have
\begin{equation}
\sum_{\substack{1 \le l \le L, \\ l \text{ is a square}}} M_{\star}(z,l,\delta) \ll_{\epsilon} \left(\frac{L^{\frac{1}{2}}}{y}+L\delta^{\frac{1}{2}}+L^{\frac{3}{2}}\delta\right)L^{\epsilon}.
\label{eq:Mstar}
\end{equation}
\label{lem:gen}
\end{lem}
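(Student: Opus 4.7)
The plan is to translate the condition $u(\gamma z, z) \le \delta$ into explicit linear constraints on the entries of $\gamma = \begin{pmatrix} a & b \\ c & d \end{pmatrix}$ and then perform a lattice-point count. Using $\Im(\gamma z) = ly/|cz+d|^2$ and $\gamma z - z = (-cz^2 + (a-d)z + b)/(cz+d)$, one computes
$$u(\gamma z, z) = \frac{|-cz^2 + (a-d)z + b|^2}{4 y^2 l}.$$
Separating real and imaginary parts of the polynomial $-cz^2 + (a-d)z + b$ shows that $u(\gamma z, z) \le \delta$ forces the two inequalities
$$|(a-d) - 2cx| \le 2\sqrt{l\delta}, \qquad |b + (a-d)x - c(x^2-y^2)| \le 2y\sqrt{l\delta}.$$

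I would then parametrise $\gamma$ by $(c, t, s, b)$ with $t = a-d$, $s = a+d$, so that $4l = s^2 - t^2 - 4bc$. For $l = m^2$ a square this factors as $(s - 2m)(s + 2m) = t^2 + 4bc =: N$, and the hypothesis $(a+d)^2 \ne 4l$ ensures $N \ne 0$. Hence for each fixed triple $(c, t, b)$ the number of admissible pairs $(s, m)$ is bounded by the divisor function $d(|N|) \ll L^\epsilon$. It therefore suffices to count integer triples $(c, t, b)$ with $4 \mid c$, $c \ne 0$, satisfying the two linear inequalities above for some $l \le L$, and multiply by $L^\epsilon$.

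The crux is a sharp bound on $|c|$, obtained by splitting according to the sign of $(a+d)^2 - 4l$. In the hyperbolic case both fixed points of $\gamma$ are real, so $|-cz^2 + (a-d)z + b| = |c|\,|z-z_1|\,|z-z_2| \ge |c|y^2$, and comparing with the size bound gives $|c| \ll \sqrt{L\delta}/y$. In the elliptic case the fixed point $z_1 \in \BH$ has $\Im z_1 = \sqrt{4l - s^2}/(2|c|)$; a short computation from $u(\gamma z, z) \le \delta$ under the standing hypothesis $|x| \ll 1$, $y \gg 1$ forces $\Im z_1$ to lie close to $y$, yielding $|c| \ll \sqrt{L}/y$.

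Combining these, for each admissible $c$ the $t$-interval contains $\ll 1 + \sqrt{L\delta}$ integers and the $b$-interval contains $\ll 1 + y\sqrt{L\delta}$ integers, so the total count is dominated by $L^\epsilon \cdot \#\{c\} \cdot (1 + \sqrt{L\delta})(1 + y\sqrt{L\delta})$. Plugging in $\#\{c\} \ll \sqrt L/y$ in the elliptic case and expanding produces precisely the three target terms $L^{1/2}/y + L\delta^{1/2} + L^{3/2}\delta$, with the cross-term $L\delta^{1/2}/y$ absorbed. The hyperbolic case only contributes when $L\delta \ge 16 y^2$, in which regime all resulting terms are dominated by the target under the assumptions $\delta \le 1$ and $y \gg 1$. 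The main obstacle is the elliptic bound $|c| \ll \sqrt L/y$: without this gain of $1/y$, the factor of $y$ in the $b$-interval would propagate as an extraneous $yL\delta$ term and ruin the estimate; obtaining it requires using the condition $u(\gamma z, z) \le \delta$ geometrically, not merely via the two scalar inequalities.
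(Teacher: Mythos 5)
Your argument is correct and is essentially the same counting argument as Lemma 4.1 of Templier's hybrid paper, which is all the paper itself invokes here without reproducing a proof: confine $a-d$ and $b$ to short intervals via real and imaginary parts, bound the number of admissible $c\neq 0$ by $\sqrt{L}/y$ using the fixed points (equivalently the identity $4ly^{2}u=(c|z|^{2}+(d-a)x-b)^{2}+((a+d)^{2}-4l)y^{2}$, which packages your hyperbolic/elliptic dichotomy), and finish with the divisor bound applied to $(s-2m)(s+2m)=t^{2}+4bc\neq 0$. The only cosmetic imprecision is the phrase that $u(\gamma z,z)\le\delta$ ``forces'' $\Im z_{1}$ close to $y$ --- it does not, but in the complementary case one gets the even stronger bound $|c|\ll\sqrt{L\delta}/y$, so the conclusion stands.
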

\begin{proof} This is basically Lemma 4.1. of \cite{Thybrid}. The same proof carries through with ease as we don't care about a level aspect.
\end{proof}

\begin{lem} For $z=x+iy \in \BH$ with $|x| \ll 1$, $y \gg 1$ and $l \in \BN$ with $d(l)\ll 1$ we have
\begin{equation}
M_{u}(z,l,\delta) \ll 1+l^{\frac{1}{2}}\delta^{\frac{1}{2}}y.
\label{eq:Mu}
\end{equation}
\label{lem:upper}
\end{lem}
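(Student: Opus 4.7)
The plan is to parametrise directly. Any $\gamma \in G_l(4)$ with $c = 0$ has the form $\gamma = \begin{pmatrix} a & b \\ 0 & d \end{pmatrix}$ with $ad = l$ and $b \in \BZ$; the congruence condition modulo $4$ is automatic since the lower-left entry is $0$. The restriction $a \neq d$ is part of the definition of $M_u$. So I reduce to counting triples $(a, b, d)$ satisfying these constraints together with the hyperbolic-distance inequality $u(\gamma z, z) \le \delta$.

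Next I would carry out the direct computation of $u(\gamma z, z)$. Since $\gamma z = \tfrac{a}{d} z + \tfrac{b}{d}$ and $\im(\gamma z) = \tfrac{a}{d} y$, one finds
$$
u(\gamma z, z) = \frac{|\gamma z - z|^2}{4\, \im(\gamma z)\, y} = \frac{\bigl((a-d)x + b\bigr)^2 + (a-d)^2 y^2}{4\, l \, y^2}.
$$
From $u(\gamma z, z) \le \delta$, the term $(a-d)^2 y^2 \le 4 l y^2 \delta$ forces $|a - d| \le 2 \sqrt{l\delta}$, and independently $((a-d)x + b)^2 \le 4 l y^2 \delta$ forces $b$ to lie in an interval (depending on $a-d$ and $x$) of length at most $4 y\sqrt{l\delta}$, hence contains at most $1 + 4 y \sqrt{l\delta}$ integer values.

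Finally I would tally: the number of integer factorisations $ad = l$ is $2\, d(l) \ll 1$ by the hypothesis $d(l) \ll 1$ (restricting further to $a \neq d$ and $|a-d| \le 2\sqrt{l\delta}$ only decreases this count). Each such factorisation contributes at most $O(1 + y\sqrt{l\delta})$ admissible values of $b$. Multiplying yields
$$
M_u(z, l, \delta) \ll d(l) \bigl( 1 + l^{\frac{1}{2}} \delta^{\frac{1}{2}} y \bigr) \ll 1 + l^{\frac{1}{2}} \delta^{\frac{1}{2}} y,
$$
as claimed. There is no real obstacle here; the only thing to be careful about is that the hypothesis $|x| \ll 1$ is not actually used in the counting of $b$ (the interval length is translation-invariant), while the hypothesis $y \gg 1$ ensures that the weaker term in $1 + y\sqrt{l\delta}$ is the dominant one in the regime of interest, so the bound is clean.
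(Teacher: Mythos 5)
Your proof is correct. The paper itself gives no argument here: it simply cites a variant of Lemma~1.3 from the appendix of Iwaniec--Sarnak \cite{IS95}, and your direct parametrisation (factorisations $ad=l$, of which there are $\ll d(l)\ll 1$, times the $O(1+y\sqrt{l\delta})$ admissible integers $b$ in the interval forced by $\bigl((a-d)x+b\bigr)^2\le 4ly^2\delta$) is exactly the standard computation underlying that citation. Your closing observations are also accurate: the hypotheses $|x|\ll 1$ and $y\gg 1$ are not actually needed for this particular count (unlike for $M_{\star}$), and the only minor point worth recording is that both sign choices $(a,b,d)$ and $(-a,-b,-d)$ occur in $G_l(4)$, which costs only a factor of $2$ absorbed into the implied constant.
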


\begin{proof} This is part of the variant of Lemma 1.3 given in the appendix of \cite{IS95}
\end{proof}

\begin{lem} For $z=x+iy \in \BH$ with $|x|\ll 1$ and $y\gg 1$ we have

\begin{equation}
M_{p}(z,l,\delta) \ll 1+l^{\frac{1}{2}}\delta^{\frac{1}{2}}y.
\end{equation}
\label{lem:para}
\end{lem}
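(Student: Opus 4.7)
The plan is to split according to the structure of $\gamma \in G_l(4)$ with $(a+d)^2 = 4l$. First, the trace being $\pm 2\sqrt{l} \in \BZ$ forces $l$ to be a perfect square; write $l = n^2$. By Cayley--Hamilton every such $\gamma$ admits a decomposition $\gamma = \pm n I + N$ with $N^2 = 0$, where $N$ is an integer matrix whose bottom-left entry is divisible by $4$. I will sort matrices by the form of $N$: (i) $N = 0$; (ii) $N \neq 0$ with zero bottom-left entry; (iii) $N$ has a nonzero bottom-left entry.

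Case (i) contributes $O(1)$ (just the scalars $\pm n I$). In case (ii), nilpotency forces $N$ to be strictly upper triangular, so $\gamma = \begin{pmatrix} \pm n & b \\ 0 & \pm n \end{pmatrix}$; a direct computation gives $u(\gamma z, z) = b^2/(4 n^2 y^2)$, so $u \le \delta$ becomes $|b| \le 2 n y \sqrt{\delta}$, yielding the desired bound $O(1 + n y \sqrt{\delta}) = O(1 + l^{1/2} y \sqrt{\delta})$.

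The substantive case is (iii). Here $\gamma$ has a unique rational fixed point $\xi = s/c \in \BQ$, where $N = \begin{pmatrix} s & t \\ c & -s \end{pmatrix}$ with $s^2 + tc = 0$, and the identity $(\gamma z - z)(cz+d) = -c(z-\xi)^2$ yields the clean formula
$$ u(\gamma z, z) = \frac{c^2\,|z-\xi|^4}{4 n^2 y^2}. $$
Writing $\xi = p/q$ in lowest terms, the integrality conditions force $c = q^2 h$, $s = p q h$, $t = -p^2 h$ for a uniquely determined nonzero $h \in \BZ$ with $4 \mid q^2 h$. The inequality $u \le \delta$ becomes $q^2 |h|\,|z - p/q|^2 \le 2 n y \sqrt{\delta}$. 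Since $|z - p/q|^2 \ge y^2$, admissibility forces $q^2 h_0(q) \le 2 n \sqrt{\delta}/y$, where $h_0(q) = 1$ if $q$ is even and $h_0(q) = 4$ if $q$ is odd. For each admissible $q$ I would count the valid $h$ (an arithmetic progression in a bounded interval) and then sum over $p$ using the standard estimate $\sum_{p \in \BZ}(y^2 + (x-p/q)^2)^{-1} \ll q/y$; the range of $q$ is finite, and combining these ingredients delivers the bound $O(1 + n y \sqrt{\delta})$.

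The main technical obstacle lies in case (iii), specifically in controlling fixed points $\xi = p/q$ with small denominator, since each such $\xi$ can be fixed by many $\gamma$'s (one for every admissible $h$). To keep the accounting clean it will be essential to exploit the congruence $4 \mid q^2 h$ (which restricts $h$ to $4 \BZ$ when $q$ is odd) together with the constraint $|z - p/q|^2 \ge y^2$. I expect splitting the $q$-sum at the threshold $q \sim \sqrt{n\sqrt{\delta}/y}$, where the admissible window for $p$ transitions from $\gtrsim y$ to $\lesssim y$, to give the cleanest accounting, mirroring the strategy used in the cited references for $M_\star$ and $M_u$.
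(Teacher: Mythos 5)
The paper does not actually prove this lemma; it cites Lemma 4.4 of Templier's hybrid paper, so there is no internal proof to compare against line by line. Your structural reduction is correct and is essentially the standard argument behind that citation: $(a+d)^2=4l$ forces $l=n^2$; Cayley--Hamilton gives $\gamma=\pm nI+N$ with $N^2=0$ integral; the scalar and strictly upper-triangular cases contribute $O(1)$ and $O(1+ny\sqrt{\delta})$ respectively; and for $c\neq 0$ your formula $u(\gamma z,z)=c^2|z-\xi|^4/(4n^2y^2)$ and the parametrization $c=q^2h$, $s=pqh$, $t=-p^2h$ with $\gcd(p,q)=1$ are all correct.

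The gap is the final assembly, which you leave as ``combining these ingredients delivers the bound.'' It does not. For fixed $q$ your own estimates give at most $\sum_{p}2ny\sqrt{\delta}/\bigl(q^2h_0(q)(y^2+(x-p/q)^2)\bigr)\ll n\sqrt{\delta}/(qh_0(q))$ admissible pairs $(p,h)$, and summing over the admissible range $q\le Q\asymp(n\sqrt{\delta}/y)^{1/2}$ yields $n\sqrt{\delta}\log(2+Q)$, which is not $O(1+n\sqrt{\delta}\,y)$ when $y\asymp1$. This logarithm is not an artifact of crude bounding: take $z=i$ and the matrices $\gamma=nI+h\left(\begin{smallmatrix}pq&-p^2\\ q^2&-pq\end{smallmatrix}\right)$ with $q$ even, $2\le q\le(n\sqrt{\delta})^{1/2}$, $1\le p\le q$, $\gcd(p,q)=1$, and $1\le h\le 2n\sqrt{\delta}/(p^2+q^2)$. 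These all lie in $G_{n^2}(4)$, satisfy $u(\gamma i,i)=h^2(p^2+q^2)^2/(4n^2)\le\delta$, and are pairwise distinct, so $M_p(i,n^2,\delta)\gg n\sqrt{\delta}\sum_{q\le(n\sqrt{\delta})^{1/2},\,2\mid q}\phi(q)/q^2\gg n\sqrt{\delta}\log(n\sqrt{\delta})$, exceeding $1+l^{1/2}\delta^{1/2}y$ for bounded $y$ and $n\sqrt{\delta}$ large. So no execution of your sketch can produce the clean bound as stated; what your argument does prove is $M_p(z,l,\delta)\ll_{\epsilon}(2+l\delta)^{\epsilon}\bigl(1+l^{1/2}\delta^{1/2}y\bigr)$. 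You should state and prove that version (it is harmless for the application, where $l\delta\ll k^{O(1)}$ and every estimate already carries a $k^{\epsilon}$), and flag the discrepancy with the lemma as quoted from the citation rather than claim the $\epsilon$-free bound.
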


\begin{proof} This is Lemma 4.4 of \cite{Thybrid}. Although we don't restrict ourselves to such a fundamental domain, the same proof carries through.
\end{proof}

It is now not hard to bound the expression
$$
\sum_{\gamma \in G_l(4)} (1+u(\gamma z,z))^{-\frac{k}{2}}
$$
polynomially in $l,k,y$ for $k \ge \frac{5}{2}$, thus we omit the details. Instead we give the following insight. If $u(\gamma z,z) \ge k^{-1+\eta}$ for some positive real $\eta$, then the expression
$$
(1+u(\gamma z,z))^{-\frac{k}{2}}
$$
has super-polynomial decay in $k$, thus if $l,y$ only depend on $k$ polynomially we can completely neglect that part as follows:
\begin{equation}
\sum_{\gamma \in G_l(4)} (1+u(\gamma z,z))^{\frac{-k}{2}} \le \!\!\sum_{\substack{\gamma \in G_l(4),\\ u(\gamma z,z)\le k^{-1+\eta}}}\!\!1+(1+k^{-1+\eta})^{-\frac{k}{2}+\frac{5}{4}}\sum_{\gamma \in G_l(4)} (1+u(\gamma z,z))^{\frac{-5}{4}}.
\label{eq:superpoly}
\end{equation}
From now on we will assume, that $\Lambda$ and $y$ will depend polynomially on $k$, so that \eqref{eq:superpoly} becomes
\begin{equation}
\sum_{\gamma \in G_l(4)} (1+u(\gamma z,z))^{\frac{-k}{2}} \ll_{\eta} M(z,l,k^{-1+\eta}).
\label{eq:neglect}
\end{equation}

We now we use this inequality to estimate the right hand side of \eqref{eq:ampli}. We first consider the case $\CM=\CM_1$. The contribution of $l=1$ is
$$
k \Lambda \left(1+yk^{\frac{-1+\eta}{2}} \right)
$$
by Lemma \ref{lem:gen} with $L=1$ and Lemmata \ref{lem:upper}, \ref{lem:para}.
The contribution of $l>1$ is
$$
k\Lambda^{\epsilon}\left(\frac{1}{y}+\Lambda^2k^{\frac{-1+\eta}{2}}+\Lambda^4k^{-1+\eta} \right)
$$
for the generic matrices by Lemma \ref{lem:gen} with $L=2^4\Lambda^4$ and by Lemmata \ref{lem:upper} and \ref{lem:para} the contribution of the upper triangular and the parabolic matrices is
$$
k \sum_{l=p^2q^2} l^{-\frac{1}{2}} (1+l^{\frac{1}{2}}yk^{\frac{-1+\eta}{2}}) \ll_{\eta} k\left(1+\Lambda^2k^{\frac{-1+\eta}{2}}y\right).
$$
Thus we get that sum over $\CM_1$ is bounded by
\begin{equation}
k\Lambda^{\epsilon}\left(\Lambda+\Lambda^2yk^{\frac{-1+\eta}{2}}+\Lambda^4k^{-1+\eta}\right).
\label{eq:m1}
\end{equation}
For $\CM=\CM_2$ the contribution of $l=1$ is again
$$
k\Lambda \left(1+yk^{\frac{-1+\eta}{2}} \right).
$$
For $l>1$ the contribution of the generic matrices is
$$
k\Lambda^{\epsilon}\left(\frac{1}{y}+\Lambda^4k^{\frac{-1+\eta}{2}}+\Lambda^8k^{-1+\eta} \right)
$$
by Lemma \ref{lem:gen} with $L=2^8\Lambda^8$ and by Lemmata \ref{lem:upper} and \ref{lem:para} the contribution of the upper triangular and parabolic matrices is
$$
k \left(\sum_{l=p^4}+\sum_{l=p^4q^4}\right) l^{-\frac{1}{2}}  (1+l^{\frac{1}{2}}yk^{\frac{-1+\eta}{2}}) \ll_{\eta} k \left( \frac{1}{\Lambda}+\Lambda y k^{\frac{-1+\eta}{2}}+\frac{1}{\Lambda^2}+\Lambda^2yk^{\frac{-1+\eta}{2}} \right).
$$
Thus we get that the sum over $\CM_2$ is bounded by
\begin{equation}
k\Lambda^{\epsilon} \left(\Lambda+\Lambda^2yk^{\frac{-1+\eta}{2}} + \Lambda^4k^{\frac{-1+\eta}{2}}+\Lambda^8k^{-1+\eta} \right).
\label{eq:m2}
\end{equation}

Combining \eqref{eq:m1} and \eqref{eq:m2} with \eqref{eq:ampli} and letting $\epsilon,\eta$ be suitably small we get that
\begin{equation}
\max_{B \in \{I,W_4,V_4\}} y^k |f_1|_k B(z)|^2 \ll_{\epsilon} k^{1+\epsilon}\Lambda^{\epsilon}\left(\frac{1}{\Lambda}+ y k^{-\frac{1}{2}}+\Lambda^2k^{-\frac{1}{2}}+\Lambda^6k^{-1}\right).
\label{eq:sup}
\end{equation}
Since we may assume $y \le k^{\frac{1}{4}}$ by \eqref{eq:nearcusp} we can choose $\Lambda=k^{\frac{1}{7}}$ and we achieve
\begin{equation}
\sup_{z \in \BH} y^{\frac{k}{2}}|f_1(z)| \ll_{\epsilon} k^{\frac{3}{7}+\epsilon};
\label{eq:supi}
\end{equation}
completing the proof of Theorem \ref{thm:1}.

\subsection{Lower bounds} As in Theorem \ref{thm:3} let $\{f_j\} \subseteq S^+_k(\Gamma_0(4)^{\star})$ be an orthonormal basis of Hecke eigenforms of half-integral weight $k$ contained in the Kohnen plus space and let $\{F_j\} \subseteq S_{2k-1}(\SL2(\BZ))$ be the corresponding arithmetically normalised Hecke eigenforms under the Shimura map.\\

The first part of the first lower bound is trivial as
$$
\sup_{z \in \BH} y^{k} |f_j(z)|^2 \gg \langle f_j,f_j \rangle_{\Gamma_0(4)} = 1.
$$
The second part follows from the inequality
\begin{equation}\begin{aligned}
y^{\frac{k}{2}} |\widehat{f_j}(|D|)| &= \left |\int_0^1 y^{\frac{k}{2}} f(z) e(-|D|z) dx \right| \\
& \le e^{2 \pi |D| y} \cdot \int_0^1 y^{\frac{k}{2}}|f_j(z)| dx \\
& \le e^{2 \pi |D| y} \cdot \sup_{z' \in \BH} y'^{\frac{k}{2}} |f_j(z')|.
\label{eq:fourcoeflow}
\end{aligned} \end{equation}
This inequality in conjunction with the Propositions \ref{prop:coeffscritvalue}, \ref{prop:norm} and \ref{prop:sym} gives
$$
\sup_{z' \in \BH} y'^{\frac{k}{2}} |f_j(z')| \gg_{\epsilon} \frac{(4 \pi y)^{\frac{k}{2}}}{\Gamma(k)^{\frac{1}{2}}} \cdot |D|^{\frac{k-1}{2}}k^{-\epsilon}e^{-2 \pi |D|y} \cdot L\left(F_j,\legendre{D}{\cdot},\frac{1}{2}\right).
$$
The choice $y=\frac{k}{4 \pi |D|}$ gives the desired inequality. Similarly we have
\begin{equation}\begin{aligned}
\sup_{z' \in \BH} \sum_j y'^{k} |f_j(z')|^2 &\ge \sum_j \left(\int_0^1 dx \right) \left(\int_0^1 y^k |f_j(z)|^2 dx \right) \\
& \ge \sum_j \left(\int_0^1 y^{\frac{k}{2}}|f_j(z)|dx\right)^2 \\
& \ge  y^k e^{-4 \pi y} \cdot \sum_j \left|\widehat{f_j}(1)\right|^2.
\label{eq:lowerplus}
\end{aligned}\end{equation}
By Corollary \ref{cor:halfpluscoeff} we have
\begin{equation}\begin{aligned}
\sum_j \left|\widehat{f_j}(1)\right|^2 &= \frac{6 \cdot (4\pi)^{k-1}}{\Gamma(k-1)} \cdot \frac{2}{3} \left[1+(-1)^{ \left\lfloor \frac{k+\frac{1}{2}}{2} \right\rfloor} \pi \sqrt{2} \sum_{c \ge 1}H_c(1,1)J_{k-1}\left( \frac{\pi}{c} \right) \right] \\
& \gg \frac{(4\pi)^{k-1}}{\Gamma(k-1)} \cdot \left[1-2 \pi \sum_{c \ge 1}\left|J_{k-1}\left( \frac{\pi}{c} \right)\right| \right].
\label{eq:pluscoe}
\end{aligned}\end{equation}
Now we use the following proposition.
\begin{prop}\label{prop:JBesselverysmall}
One has for $\rho \ge 2 x^2$:
$$
|J_{\rho}(x)| \ll \frac{\left(\frac{x}{2} \right)^{\rho}}{\Gamma(\rho+1)}.
$$
\end{prop}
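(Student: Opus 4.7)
The natural approach is to work directly with the Taylor series of $J_\rho$ at the origin, factor out the leading term, and show that the remaining series is bounded by an absolute constant under the hypothesis $\rho \ge 2x^2$.

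Recall the standard power series
$$
J_{\rho}(x)=\sum_{m=0}^{\infty}\frac{(-1)^m}{m!\,\Gamma(\rho+m+1)}\left(\frac{x}{2}\right)^{\rho+2m},
$$
which converges for every $x>0$ and every real $\rho>-1$ (and in particular for the half-integers $\rho=k-1$ arising in this paper). The first step is to pull out the $m=0$ term:
$$
J_{\rho}(x)=\frac{(x/2)^{\rho}}{\Gamma(\rho+1)}\sum_{m=0}^{\infty}\frac{(-1)^m}{m!}\cdot\frac{(x/2)^{2m}}{(\rho+1)(\rho+2)\cdots(\rho+m)},
$$
using $\Gamma(\rho+m+1)/\Gamma(\rho+1)=(\rho+1)(\rho+2)\cdots(\rho+m)$ with the convention that the empty product equals $1$.

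The next step is a crude term-by-term estimate. Each factor in the denominator satisfies $\rho+j\ge\rho$, so the $m$-th summand is bounded in absolute value by
$$
\frac{1}{m!}\cdot\frac{(x/2)^{2m}}{\rho^m}=\frac{1}{m!}\left(\frac{x^2}{4\rho}\right)^m.
$$
Now invoke the hypothesis $\rho\ge 2x^2$, which gives $x^2/(4\rho)\le 1/8$. Consequently
$$
\left|\sum_{m=0}^{\infty}\frac{(-1)^m}{m!}\cdot\frac{(x/2)^{2m}}{(\rho+1)\cdots(\rho+m)}\right|\le\sum_{m=0}^{\infty}\frac{1}{m!\,8^m}=e^{1/8},
$$
which is an absolute constant. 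Substituting back yields $|J_{\rho}(x)|\le e^{1/8}\,(x/2)^{\rho}/\Gamma(\rho+1)$, as required.

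There is no real obstacle here; the only delicate point is ensuring that the ratio $(x/2)^{2m}/[(\rho+1)\cdots(\rho+m)]$ is dominated by a geometric series with ratio strictly less than $1$, and the chosen threshold $\rho\ge 2x^2$ is comfortably sufficient. One could of course optimise the implicit constant (e.g.\ by factoring out more terms), but since only a qualitative $\ll$ bound is needed for the application in Theorem \ref{thm:3}, this direct argument suffices.
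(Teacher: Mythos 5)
Your proof is correct and complete: the term-by-term bound $(\rho+1)\cdots(\rho+m)\ge\rho^{m}$ together with $x^{2}/(4\rho)\le 1/8$ under the hypothesis $\rho\ge 2x^{2}$ does dominate the tail by the convergent series $\sum_{m\ge 0}1/(m!\,8^{m})=e^{1/8}$. The paper itself only cites Proposition 8 of \cite{realweight} for this estimate, and the argument there is the same standard one via the power series of $J_{\rho}$, so your proposal matches the intended proof.
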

\begin{proof} See Proposition 8 of \cite{realweight}.
\end{proof}

If $k \ge 21$ we are able to apply it in order to get the estimate
$$
\sum_{c \ge 1}\left|J_{k-1}\left( \frac{\pi}{c} \right)\right| \ll \frac{\left(\frac{\pi}{2} \right)^{k-1}}{\Gamma(k)} \sum_{c \ge 1} \frac{1}{c^{k-1}} = o(1).
$$
Combining this with the equations \eqref{eq:lowerplus} and \eqref{eq:pluscoe} and making the choice $y=\frac{k}{4 \pi}$ gives the last lower bound.

\begin{acknowledgement}This work is based on my master's thesis, which I completed during November 2013 - April 2014 in Bristol, UK, and further improved upon during my PhD in Bristol, UK. I would like to thank Prof. Kowalski, for enabling me to do my master's thesis abroad, Dr. Saha, for his useful comments and discussions on the topic, and Prof. Harcos for his helpful remarks.
\end{acknowledgement}

\bibliography{Bibliography}
\end{document}